\let\shlhetal\relax\fi
\newtheorem{theorem}{Theorem}[section]
\newtheorem{lemma}[theorem]{Lemma}
\newtheorem{proposition}[theorem]{Proposition}
\newtheorem{corollary}[theorem]{Corollary}
\newtheorem{conjecture}[theorem]{Conjecture}
\theoremstyle{definition}
\newtheorem{definition}[theorem]{Definition}
\theoremstyle{remark}
\def\mathunderaccent#1#2 {\let\theaccent#1\skewfactor#2
\mathpalette\putaccentunder}
\def\putaccentunder#1#2{\oalign{$#1#2$\crcr\hidewidth
\vbox to.2ex{\hbox{$#1\skew\skewfactor\theaccent{}$}\vss}\hidewidth}}
\def\smallbox#1{\leavevmode\thinspace\hbox{\vrule\vtop{\vbox
   {\hrule\kern1pt\hbox{\vphantom{\tt/}\thinspace{\tt#1}\thinspace}}
   \kern1pt\hrule}\vrule}\thinspace}
\title{The congruence subgroup problem for the free metabelian group on two generators}
\author{David El-Chai Ben-Ezra}
\address{Institute of Mathematics
 The Hebrew University of Jerusalem
 Jerusalem 91904, Israel}
\email{davidel-chai.ben-ezra@mail.huji.ac.il}
\thanks{ I offer my deepest thanks to my great teacher Prof. Alexander Lubotzky for his  devoted and sensible guidance, and for Mr. Baum and Mr. Schwartz, Ruding foundation trustees, for their generous support. I would also like to thank the referees for several critical remarks which led to some significant simlifications.}
\subjclass[2010]{Primary 20E18, 20H05, Secondary 20E05, 20F28} \keywords{congruence subgroup problem, profinite completion, free metabelian group, free solvable group}
\begin{document}
\let\labeloriginal\label
\let\reforiginal\ref

\begin{abstract}
In this paper we  describe the profinite completion of the free solvable group on $m$ generators of solvability length $r \ge 1$. Then, we show that for $m=r=2$, the free metabelian group on two generators does not have the Congruence Subgroup Property.

\end{abstract}

\maketitle


\section{Introduction}

Let $\Gamma:=GL_m(\mathbb{Z})$ be the matrix group of the $m \times m$ invertible matrices over $\mathbb{Z}$. For $n \in \mathbb{N}$, set: $\Gamma[n]:=\ker (\Gamma \to  GL_m(\mathbb{Z}_n))$ where $\mathbb{Z}_n:=\mathbb{Z}/n\mathbb{Z}$.
 The classiscal congruence subgroup problem asks whether every subgroup of $\Gamma$ of finite index contains a congruence subgroup $\Gamma[n]$, for some $n \in \mathbb{N}$?
\newline

By referring to  $GL_m(\mathbb{Z})$ as the automorphism group of $\mathbb{Z}^m$, one can generalize the congruence subgroup problem as follows: Let $G$ be a finitely generated group, and let $\Gamma:=Aut(G)$ be the automorphism group of $G$. For a characteristic subgroup $K$ of finite index in $G$, set $\Gamma[K]:=\ker (\Gamma \to Aut(G/K))$. Now, the generalized congruence subgroup problem asks: does every finite index subgroup of $\Gamma$ contain a congruence subgroup of the form $\Gamma[K]$ for some characteristic subgroup $K$ of finite index in $G$? In the case where the answer to this question is positive we say that $G$ has the CSP (Congruence Subgroup Property).
\newline

We prefer to say that $G$ has the CSP rather than $\Gamma$, as there could potentially exist two groups $G_1$, $G_2$ with $Aut(G_1)$, $Aut(G_2)$ isomorphic such that just one of these automorphism groups has the CSP. But when there is no danger of ambiguities we will also say that $\Gamma=Aut(G)$ has the CSP.\newline

It is well known that $\mathbb{Z}^2$, the free abelian group on two generators, does not have the CSP. This result was known already in the 19th century ([Su], [Ra]). On the other hand, in 2001 Asada [A] showed, by using methods of algebraic geometry,  that $F_2$, the free group on two generators does have the CSP. Later (2011), Bux, Ershov and Rapinchuk [BER] provided a group theoretic version of this result and proof.\newline

Our paper deals with an intermediate case: the free metabelian group on two generators (or more generally, the free solvable group of length $r \ge 2$ on two generators). As we will see, a large part of Bux-Ershov-Rapinchuk's proof works for the automorphism group of the free metabelian group, which initially caused us to believe that it has the CSP. However, as we will show below, it does not.\newline

\begin{theorem}
\label{main} The free metabelian group on two generators $\Phi$ does not have the CSP.
\end{theorem}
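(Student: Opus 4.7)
The plan is to relate the congruence kernel of $\Phi$ to that of $\mathbb{Z}^2$ via the abelianization $\Phi\to\Phi/\Phi'=\mathbb{Z}^2$, and to exploit the classical non-CSP of $GL_2(\mathbb{Z})$ recalled in the introduction. The abelianization induces a surjection $\pi:Aut(\Phi)\twoheadrightarrow GL_2(\mathbb{Z})$, which sits in a commutative diagram of short exact sequences
\[
\begin{array}{ccccccccc}
1 & \to & C(\Phi) & \to & \widehat{Aut(\Phi)} & \to & \overline{Aut(\Phi)} & \to & 1 \\
 & & \downarrow f_1 & & \downarrow f_2 & & \downarrow f_3 & & \\
1 & \to & C(\mathbb{Z}^2) & \to & \widehat{GL_2(\mathbb{Z})} & \to & GL_2(\widehat{\mathbb{Z}}) & \to & 1
\end{array}
\]
in which $\overline{Aut(G)}$ denotes the image of $\widehat{Aut(G)}$ in $Aut(\widehat{G})$ (the congruence completion) and $C(G)$ the congruence kernel. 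The middle vertical $f_2$ is surjective because $\pi$ is (profinite completion being right exact); the right vertical $f_3$ is surjective by density of $GL_2(\mathbb{Z})$ in $GL_2(\widehat{\mathbb{Z}})$ together with compactness of the source. The bottom row is classical and $C(\mathbb{Z}^2)\neq 1$.

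By a snake-lemma style chase, once $f_2$ is known to be surjective, $f_1$ is surjective if and only if the restriction $\ker f_2\to\ker f_3$ is surjective. Concretely: given $u\in C(\mathbb{Z}^2)$, any lift $b\in\widehat{Aut(\Phi)}$ of $u$ has image $\bar b\in\overline{Aut(\Phi)}$ lying in $\ker f_3$; if one can find $z\in\ker f_2$ whose image in $\overline{Aut(\Phi)}$ equals $\bar b^{-1}$, then $zb\in C(\Phi)$ is a lift of $u$ to $C(\Phi)$. So the theorem reduces to showing that every ``congruence IA-automorphism'' of $\widehat{\Phi}$ (i.e.\ element of $\ker f_3$) is realizable by a profinite IA-automorphism (an element of $\ker f_2$).

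Verifying this surjectivity is the main technical obstacle and is where the metabelian hypothesis becomes decisive: the analogous surjectivity fails for $F_2$, which is precisely why the Bux--Ershov--Rapinchuk proof of CSP for $F_2$ succeeds there. For $\Phi$ the plan is to use the explicit description of $\widehat{\Phi}$ developed in the earlier sections (a profinite Magnus-type embedding): the metabelian commutator $\Phi'$ sits as a concrete $\mathbb{Z}[\mathbb{Z}^2]$-module, and $IA(\Phi)$ acts on it richly enough (by Fox-derivative-type transformations) to induce every IA-automorphism of every finite metabelian quotient $\Phi/K$. This density ensures the required surjectivity. Once established, $f_1$ is surjective, so $C(\Phi)$ maps onto the nontrivial $C(\mathbb{Z}^2)$, forcing $C(\Phi)\neq 1$ and proving that $\Phi$ does not have the CSP.
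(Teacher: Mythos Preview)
Your snake-lemma framework is a legitimate reformulation and is essentially equivalent to the paper's route through $Out(\Phi)$; the reduction to surjectivity of $\ker f_2\to\ker f_3$ is fine. The genuine gap is in your description of that key step. You assert that $IA(\Phi)$ is ``rich enough (by Fox-derivative-type transformations) to induce every IA-automorphism of every finite metabelian quotient $\Phi/K$.'' This is false, and the paper gives an explicit counterexample at the end of \S5: the map $\sigma$ sending $x_p^*\mapsto y_p^*x_p^*y_p^{*-1}$, $y_p^*\mapsto x_p^*y_p^*x_p^{*-1}$ is an IA-automorphism of $\Phi_p$ which is \emph{not} in the image of $Aut(\Phi)$ (its Fox determinant is $x_p+y_p-1$, a unit in $\mathbb{Z}_p[\mathbb{Z}_p^2]$ but not of the form $\pm x^ry^s$). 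More fundamentally, by Bachmuth--Formanek--Mochizuki one has $IA(\Phi)=Inn(\Phi)$, so $IA(\Phi)$ is as \emph{small} as possible, not rich.

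What $\ker f_3$ actually is, once you unwind it, is $\varprojlim_n IA'(\Phi_n)$ where $IA'(\Phi_n)=IA(\Phi_n)\cap \mathrm{Im}(Aut(\Phi)\to Aut(\Phi_n))$; you only need $IA(\Phi)$ to hit those IA-automorphisms of $\Phi_n$ that lift to $Aut(\Phi)$, not all of them. The heart of the paper (Theorem~\ref{last}) is the Fox-calculus computation that $IA'(\Phi_n)=Inn(\Phi_n)$: writing a general $\sigma\in IA'(\Phi_n)$ via Lemma~\ref{comsub} in terms of $p,q\in\mathbb{Z}_n[\mathbb{Z}_n^2]$, one computes $d(\sigma)=1+(1-y_n)p-(1-x_n)q$, and the constraint $d(\sigma)=\pm x_n^iy_n^j$ coming from liftability to $Aut(\Phi)$ forces the Romanovskii relation $1-x_n^iy_n^j=(1-x_n)q-(1-y_n)p$, which exhibits $\sigma$ as conjugation by an explicit element of $\Phi_n$. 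Once $IA'(\Phi_n)=Inn(\Phi_n)$ is established, the surjectivity you need is trivial (since $Inn(\Phi)\to Inn(\Phi_n)$ is obviously onto). Equivalently, and this is how the paper phrases it, the congruence topology on $Out(\Phi)\cong GL_2(\mathbb{Z})$ collapses to the classical congruence topology, and the failure of CSP for $\mathbb{Z}^2$ then transfers directly. Your plan is correct in outline, but the missing idea is precisely this determinant/unit argument identifying $IA'(\Phi_n)$ with $Inn(\Phi_n)$; without it the step you labelled as routine is in fact the entire content of the theorem.
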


Here is the main line of the proof: the generalized congruence subgroup problem asks whether the profinite topology of $Aut(G)$ is the same as the congruence topology. By some standard arguments from [BER], one can pass to $Out(G)$ (if $G$ is residually finite and $\hat{G}$ is centerless, which is the case for our groups - see Theorem \ref{residual} and proposition \ref{center}). In $\S 2$ we will give an explicit description of the profinite topology and completion of the free solvable group $G=\Phi_{m,r}$ on $m$ generators and solvability length $r$. This enables one to describe the congruence topology of $Aut(G)$. What makes now the case $m=2$ so special is that $Out(\Phi_{2,r}) \cong GL_2 (\mathbb{Z})$, and the congruence subgroup problem for $\Phi_{2,r}$ boils down to the question as to whether the congruence topology of $Out(\Phi_{2,r})$ induces the full profinite topology  of $GL_2 (\mathbb{Z})$. Note that for $F_2$, the free group on two generators, this is indeed the case: the congruence topology of $Out(F_2)$ is the full profinite topology of $Out(F_2) \cong GL_2 (\mathbb{Z})$.\newline

On the other hand, we will show that for $\Phi:=\Phi_{2,2}$, the free metabelian group on two generators, the congruence topology of $Out(\Phi) \cong GL_2 (\mathbb{Z})$ is equal to the classical congruence topology of $GL_2 (\mathbb{Z})$. As it was pointed out before, it is well known that this topology is much weaker than the profinite topology of $GL_2 (\mathbb{Z})$. Hence $\Phi$ fails to have the CSP.\newline

Our method and result suggest conjecturing that all the free solvable groups $\Phi_{2,r}$ (on two generators) do not have the CSP.\newline

The paper is organized as follows: in $\S1$ we present the Romanovskii embedding, a generalization of the well-known Magnus embedding, and use it in $\S2$ to describe the profinite completion of a finitely generated free solvable group. In $\S3$ we quote a result of Bux, Ershov and Rapinchuk which they used to prove that $F_2$ does  have the CSP. In $\S4$ we formulate a conjecture on $\Phi_{2,r}$ that would imply that $\Phi_{2,r}$ does not have the CSP. Finally, in $\S5$ we prove this conjecture for $\Phi:=\Phi_{2,2}$, and conclude that $\Phi$ does not have the CSP.

\section{Romanovskii embedding}

Let $F$ be the free group on $m$ generators, and let $R$ be a normal subgroup of $F$. Denote by $R'$ the commutator subgroup of $R$. Following the well-known Magnus embedding (see [B], [RS], [M]) and its generalization by Shmel'kin (see [Sh1, Sh2]), Romanovskii found a faithful embedding of $F/(R'R^n),\: n \in \mathbb{N} \cup \{0\}$ (with the notation $R^0=1$), into a matrix group, which enables to describe $F/(R'R^n)$ through $F/R$. In this section we will present the Romanovskii embedding, and as mentioned, we will use it in the next section to describe the profinite completion of a finitely generated free solvable group.\newline

Define $G:=F/R$ and $\tilde{G}_n:=F/R'R^n$, $n \in \mathbb{N} \cup \{0\}$. Denote by $\{f_1,\ldots,f_m\}$, $\{g_1,\ldots,g_m\}$ and $\{\tilde{g}_1,\ldots,\tilde{g}_m\}$ the appropiate generators of $F$, $G$ and $\tilde{G}_n$ respectively. By the mapping $f_i \mapsto \tilde{g}_i \mapsto g_i$, $i=1,\ldots ,m$, we get a natural epimorphism $F \to \tilde{G}_n \to G$. For $n \in \mathbb{N} \cup \{0\}$ denote $\mathbb{Z}_n=\mathbb{Z}/n\mathbb{Z}$. Now, for the group $G$, denote by $\mathbb{Z}_n [G]$ the group ring of $G$ over $\mathbb{Z}_n$, and by $T_n:=\mathbb{Z}_n [G]t_{1} +\ldots +\mathbb{Z}_n [G]t_{m} $ the left free $\mathbb{Z}_n [G]$ module with basis $\{t_{1} ,t_{2} ,\ldots ,t_{m}\} $. Now, define: \[R_n (G):=\left\{\left(\begin{array}{cc} {g} & {t} \\ {0} & {1} \end{array}\right)_{} \left|_{} g\in G,_{} t\in T_n \right. \right\}.\]
One can notice that $R_n (G)$ has a group structure under formal matrix multiplication given by the formula:
\[\left(\begin{array}{cc} {g} & {t} \\ {0} & {1} \end{array}\right)\cdot \left(\begin{array}{cc} {h} & {s} \\ {0} & {1} \end{array}\right):=\left(\begin{array}{cc} {gh} & {gs+t} \\ {0} & {1} \end{array}\right).\]

\begin{theorem} [\text{[Rom]}, Theorem 2]
\label{magnus}
\textbf{}
\begin{enumerate}
\item By the mapping $\alpha$, defined by: $\tilde{g}_{i} \mapsto \left(\begin{array}{cc} {g_{i} } & {t_{i} } \\ {0} & {1} \end{array}\right)$ for  $i=1,\ldots ,m$, we get a faithful representation of $\tilde{G}_n$ as a subgroup of $R_n (G)$.
\item An element of the form $\left(\begin{array}{cc} {g} & {\sum _{i=1}^{m} a_{i} t_{i} } \\ {0} & {1} \end{array}\right)\in R_n (G)$ belongs to $Im(\alpha) $ if and only if $1-g=\sum _{i=1}^{m} a_i (1-g_i)$. In particular,
\[\left(\begin{array}{cc} {1} & {\sum _{i=1}^{m} a_{i} t_{i} } \\ {0} & {1} \end{array}\right) \in Im(\alpha) \iff \sum _{i=1}^{m} a_i (1-g_i)=0.\]
\end{enumerate}

\end{theorem}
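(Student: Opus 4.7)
My plan is the standard two-move proof: first produce $\alpha$ as a group homomorphism from $\tilde{G}_n$ and check it is injective; then read off the characterization of the image from the Fox-derivative identity.

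For the construction and injectivity, the universal property of $F$ extends the formula on generators to a homomorphism $\tilde\alpha: F \to R_n(G)$. Reading the top-left entry shows $\tilde\alpha$ covers the canonical quotient $F \to G$, so $\tilde\alpha(R)$ lies in the unipotent subgroup $U = \left\{\left(\begin{smallmatrix}1&t\\0&1\end{smallmatrix}\right) : t \in T_n\right\} \cong (T_n, +)$. Since $U$ is abelian and annihilated by $n$ (the coefficients live in $\mathbb{Z}_n$), both $R'$ and $R^n$ lie in $\ker\tilde\alpha$, so $\tilde\alpha$ descends to $\alpha: \tilde{G}_n \to R_n(G)$. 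To prove $\alpha$ is injective, note that $\ker\alpha$ still lies in $R/R'R^n$, so it suffices to show that the induced map $\bar\alpha: R/R'R^n \to T_n$, $r \mapsto$ (top-right entry), is injective. An easy induction on word length identifies this map as $r \mapsto \sum_i \overline{(\partial r/\partial f_i)}\, t_i$, where $\partial/\partial f_i$ is the Fox derivative in $\mathbb{Z}[F]$ and the overline denotes reduction modulo $R$ and modulo $n$. Injectivity then follows from the classical Crowell exact sequence
\[0 \to R/R' \to \mathbb{Z}[G]^m \to \Delta(G) \to 0\]
(first map by Fox derivatives, second sending the $i$-th basis element to $g_i - 1$): because $\Delta(G) \subset \mathbb{Z}[G]$ is $\mathbb{Z}$-torsion-free, tensoring with $\mathbb{Z}_n$ preserves left-exactness and yields an injection $R/R'R^n \cong (R/R') \otimes \mathbb{Z}_n \hookrightarrow \mathbb{Z}_n[G]^m \cong T_n$, which is exactly $\bar\alpha$.

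For part 2, necessity follows by applying the $\mathbb{Z}_n[G]$-linear map $\varphi: T_n \to \mathbb{Z}_n[G]$, $t_i \mapsto g_i - 1$, to the fundamental Fox identity $\sum_i (\partial w/\partial f_i)(f_i - 1) = w - 1$ in $\mathbb{Z}[F]$; reducing modulo $R$ and modulo $n$ gives $\sum a_i(1 - g_i) = 1 - g$. Conversely, given $M = \left(\begin{smallmatrix} g & \sum a_i t_i \\ 0 & 1 \end{smallmatrix}\right)$ satisfying this relation, pick any lift $\tilde g \in \tilde{G}_n$ of $g$; then $\alpha(\tilde g)^{-1}M$ lies in $U$ and its top-right entry is annihilated by $\varphi$, so by the same exact sequence it lies in $\bar\alpha(R/R'R^n) = \ker\varphi$, and left-multiplying $\tilde g$ by the corresponding element of $R/R'R^n$ produces a preimage of $M$.

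The main obstacle is the injectivity of $\bar\alpha$: everything reduces to the statement that the Magnus-type embedding $R/R' \hookrightarrow \mathbb{Z}[G]^m$ survives reduction modulo $n$, for which the clean route is the $\mathbb{Z}$-torsion-freeness of $\Delta(G)$. The classical Magnus case ($n=0$) together with this flatness observation contains all the content of the theorem; the rest is bookkeeping of Fox derivatives.
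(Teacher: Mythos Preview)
The paper does not prove this theorem: it is quoted from Romanovskii [Rom] and used as a black box, so there is no in-paper argument to compare against. Your proposal supplies a correct self-contained proof. The construction of $\alpha$ via the universal property of $F$ and the observation that $U$ is abelian of exponent dividing $n$ is the standard route, and your reduction of injectivity to the Crowell exact sequence $0\to R/R'\to\mathbb{Z}[G]^m\to\Delta(G)\to 0$ together with the flatness remark that $\Delta(G)\subset\mathbb{Z}[G]$ is $\mathbb{Z}$-torsion-free is exactly the right way to pass from the classical Magnus embedding ($n=0$) to the general $n$. One small point worth making explicit: for the converse in part~2 you need that $\ker\varphi$ \emph{equals} the image of $R/R'R^n$ in $T_n$, which requires not only that the tensored sequence is left-exact but also that $\Delta(G)/n\Delta(G)$ injects into $\mathbb{Z}_n[G]$; this follows from the short exact sequence $0\to\Delta(G)\to\mathbb{Z}[G]\to\mathbb{Z}\to 0$ since $\mathbb{Z}$ is torsion-free, but it is a second flatness step beyond the one you state. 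With that addition your argument is complete and is in the same spirit as Romanovskii's original proof.
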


Considering the formula for multiplication in $R_n (G)$, it is easy to see that under $\alpha$, the subgroup $R/R'R^n \leqslant \tilde{G}_n$ is isomorphic to $\left\{\left(\begin{array}{cc} {1} & {{t}} \\ {0} & {1} \end{array}\right) \in Im(\alpha) \right\}$.

\textbf{}

\section{The profinite completion of a finitely generated free solvable group}

As before, let $F$ be the free group on $m$ generators. Denote by $F^{(1)} :=[F,F]$ the commutator subgroup of $F$, and by induction, define: $F^{(r+1)} :=[F^{(r)} ,F^{(r)} ]$. Denote by $\Phi _{r} :=F/ F^{(r)}$ the free solvable group on $m$ generators of solvability length $r$, and define $\Phi _{0} :=F/F =\left\{e\right\}$. To simplify notations, we write $F$, $\Phi_r$, $\Phi_{r,n}$ instead of $F_m$, $\Phi_{m,r}$, $\Phi_{m,r,n}$ respectively, as the results of this section hold for every finite number of generatos bigger than $1$. Sometime we will use also the notation $\Phi _{r,0}=\Phi _{r}$. In this section we will present, using the Romanovskii embedding, an "explicit" description of $\hat{\Phi }_{r} $, in the sense that we will find finite quotients $\left\{\Phi _{r,n} \right\}_{n=1}^{\infty } $, of $\Phi _{r} $, such that $\hat{\Phi }_{r} \cong \mathop{\underleftarrow{\lim } }\limits_{n\in {\rm N} } \Phi _{r,n} $. This explicit description will help us later to describe some properties of $\hat{\Phi }_{r} $, and to prove the main theorem.\newline

\begin{definition}
For $n\in \mathbb{N}$, define by induction on $r$ the following groups: for $r=0$, $K_{0,n} :=F $ and for $r+1$, $K_{r+1,n}: =K'_{r,n} (K_{r,n} )^{n} $. Denote $\Phi _{r,n} :=F/ K_{r,n}  $.
\end{definition}

\begin{proposition}
\label{finite}
For $n\in \mathbb{N}$ we have:
for $r=0$, $\left|\Phi _{0,n} \right|=1$, and for $r>0$, $\left|\Phi _{r+1,n} \right|=\left|\Phi _{r,n} \right|\cdot n^{\left|\Phi _{r,n} \right|\cdot (m-1)+1} $.
\end{proposition}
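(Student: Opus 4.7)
The base case $r=0$ is immediate since $K_{0,n}=F$ gives $\Phi_{0,n}=\{e\}$. For the inductive step, I would observe that by definition $K_{r+1,n} \leq K_{r,n}$, so $K_{r,n}/K_{r+1,n}$ is a normal subgroup of $\Phi_{r+1,n}=F/K_{r+1,n}$ with quotient $F/K_{r,n}=\Phi_{r,n}$, and hence
\[
|\Phi_{r+1,n}|=|\Phi_{r,n}|\cdot |K_{r,n}/K_{r+1,n}|.
\]
So the whole task reduces to showing $|K_{r,n}/K_{r+1,n}|=n^{(m-1)|\Phi_{r,n}|+1}$. Note that by the induction hypothesis $G:=\Phi_{r,n}$ is already known to be finite, so $\mathbb{Z}_n[G]$ has exactly $n^{|G|}$ elements, which will be the source of all the arithmetic.

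To compute $|K_{r,n}/K_{r+1,n}|$, I would apply the Romanovskii embedding (Theorem \ref{magnus}) with the choice $R=K_{r,n}$. Then $\tilde{G}_n = F/R'R^n = \Phi_{r+1,n}$ and under $\alpha$ the subgroup $R/R'R^n = K_{r,n}/K_{r+1,n}$ is identified, by the remark following Theorem \ref{magnus}, with
\[
\left\{\left(\begin{array}{cc}1 & \sum_{i=1}^m a_i t_i\\ 0 & 1\end{array}\right): a_i\in\mathbb{Z}_n[G],\ \sum_{i=1}^m a_i(1-g_i)=0\right\}.
\]
Hence $|K_{r,n}/K_{r+1,n}|$ is precisely the cardinality of the kernel of the $\mathbb{Z}_n$-linear map $\varphi:\mathbb{Z}_n[G]^m\to \mathbb{Z}_n[G]$ sending $(a_1,\dots,a_m)\mapsto \sum a_i(1-g_i)$.

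The key computation is to identify the image of $\varphi$ with the augmentation ideal $I\subseteq \mathbb{Z}_n[G]$, i.e.\ the kernel of the augmentation $\varepsilon:\mathbb{Z}_n[G]\to\mathbb{Z}_n$. Clearly $\operatorname{Im}\varphi\subseteq I$. Conversely, using the identities
\[
1-g_i^{-1}=-g_i^{-1}(1-g_i),\qquad 1-gh=(1-g)+g(1-h),
\]
one shows by induction on word length that every $1-g$ (for $g\in G$) lies in the left $\mathbb{Z}_n[G]$-module generated by $\{1-g_1,\ldots,1-g_m\}$; since $I$ is the $\mathbb{Z}_n$-span of the $1-g$, we get $I\subseteq\operatorname{Im}\varphi$. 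This step — verifying that the $1-g_i$ generate the augmentation ideal as a left module — is really the only non-bookkeeping point in the argument, and it is essentially the same calculation one uses to define Fox derivatives.

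Once $\operatorname{Im}\varphi=I$ is established, one has $|I|=|\mathbb{Z}_n[G]|/|\mathbb{Z}_n|=n^{|G|-1}$, whence
\[
|\ker\varphi|=\frac{|\mathbb{Z}_n[G]^m|}{|I|}=\frac{n^{m|G|}}{n^{|G|-1}}=n^{(m-1)|G|+1}.
\]
Substituting $|G|=|\Phi_{r,n}|$ back into $|\Phi_{r+1,n}|=|\Phi_{r,n}|\cdot|\ker\varphi|$ yields the claimed recursion, completing the induction.
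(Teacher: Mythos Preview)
Your argument is correct, but it takes a genuinely different route from the paper. The paper's proof is a one-liner using the Nielsen--Schreier theorem: since $K_{r,n}$ is a finite-index subgroup of the free group $F$ of rank $m$, Schreier's index formula gives that $K_{r,n}$ is free of rank $|\Phi_{r,n}|(m-1)+1$; hence $K_{r,n}/K_{r,n}'K_{r,n}^n\cong(\mathbb{Z}_n)^{|\Phi_{r,n}|(m-1)+1}$ and the recursion follows immediately. Your approach instead invokes the Romanovskii embedding to identify $K_{r,n}/K_{r+1,n}$ with the kernel of the relation map $(a_1,\dots,a_m)\mapsto\sum a_i(1-g_i)$, and then computes this kernel by showing its image is the augmentation ideal of $\mathbb{Z}_n[\Phi_{r,n}]$. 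The paper's method is considerably shorter and more elementary here (it needs nothing from \S1), while yours has the advantage of being self-contained within the machinery the paper develops and of foreshadowing the augmentation-ideal style computations used later; but for this particular proposition Schreier's formula is the more efficient tool.
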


\begin{proof}
For $r=0$ the result is obvious. For $r+1$: by a well-known theorem of Schreier, we know that $K_{r,n} $ is free, and its rank is:  $\left|\Phi _{r,n} \right|\cdot (m-1)+1$. Therefore, $K_{r,n}/ K'_{r,n} (K_{r,n} )^{n}  $ isomorphic to $\left(\mathbb{Z} _{n} \right)^{\left|\Phi _{r,n} \right|\cdot (m-1)+1} $ and thus, $\left|\Phi _{r+1,n} \right|=\left|\Phi _{r,n} \right|\cdot n^{\left|\Phi _{r,n} \right|\cdot (m-1)+1} $.

\end{proof}

\begin{proposition}
\label{ab}
For $n\in \mathbb{N}$, denote $\Phi _{r,n}^{ab}=\Phi _{r,n}/[\Phi _{r,n},\Phi _{r,n}]$. Then $\Phi _{r,n}^{ab}=(\mathbb{Z}/ {n^r}\mathbb{Z})^m$.
\end{proposition}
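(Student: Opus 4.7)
The plan is to prove the statement by induction on $r$, working inside $F^{ab} \cong \mathbb{Z}^m$.

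First I would rewrite the quantity of interest: since $\Phi_{r,n} = F/K_{r,n}$, we have
\[
\Phi_{r,n}^{ab} \;=\; F/(F' \cdot K_{r,n}),
\]
so the task reduces to identifying the subgroup $F' K_{r,n}$ of $F$, or equivalently its image in the abelianization $F/F' = \mathbb{Z}^m$. My claim, which I would prove by induction on $r$, is:
\[
F' \cdot K_{r,n} \;=\; F' \cdot F^{n^{r}}.
\]
Given this, $\Phi_{r,n}^{ab} = F/(F' F^{n^r}) = \mathbb{Z}^m / n^r \mathbb{Z}^m = (\mathbb{Z}/n^r\mathbb{Z})^m$, as required.

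The base case $r=0$ (where $K_{0,n}=F$) and $r=1$ (where $K_{1,n}=F'F^n$, so $F'\cdot K_{1,n} = F'F^n$) are immediate. For the inductive step, I would start from
\[
K_{r+1,n} \;=\; K'_{r,n}\,(K_{r,n})^{n}
\]
and observe that $K'_{r,n}\subseteq F'$, so multiplying by $F'$ on the left collapses this factor: $F'\cdot K_{r+1,n} = F'\cdot (K_{r,n})^{n}$. Thus what I need to control is the image of $(K_{r,n})^{n}$ under the abelianization map $\pi\colon F \to \mathbb{Z}^m$.

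The key computation is that $\pi$ takes $n$-th powers to $n$ times the image, so $\pi((K_{r,n})^{n})$ is the subgroup of $\mathbb{Z}^m$ generated by $\{n\cdot \pi(x) : x\in K_{r,n}\} = n\cdot \pi(K_{r,n})$. By the inductive hypothesis $\pi(K_{r,n}) = \pi(F' K_{r,n}) = n^{r}\mathbb{Z}^m$, hence $\pi((K_{r,n})^{n}) = n^{r+1}\mathbb{Z}^m = \pi(F^{n^{r+1}})$. Therefore $F'\cdot (K_{r,n})^{n} = F'\cdot F^{n^{r+1}}$, completing the induction.

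The only subtle point—and the place I expect mild care is needed—is the passage from $(K_{r,n})^n$ to $n\cdot \pi(K_{r,n})$: one must remember that $(K_{r,n})^n$ denotes the subgroup generated by $n$-th powers, not just the set of $n$-th powers, so the argument relies on $\pi$ being a homomorphism onto an abelian group, making the set $n\cdot \pi(K_{r,n})$ already a subgroup. No other obstacle arises, since the commutator factor $K'_{r,n}$ is absorbed by $F'$ at the very first step.
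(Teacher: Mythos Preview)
Your proof is correct and follows essentially the same approach as the paper: both reduce to showing $F'\cdot K_{r,n}=F'\cdot F^{n^r}$ by induction on $r$, absorb $K'_{r,n}$ into $F'$, and then identify the image of $(K_{r,n})^n$ in $F^{ab}$ as $n^{r+1}\mathbb{Z}^m$. Your write-up is in fact more explicit than the paper's at the step $F'\cdot(K_{r,n})^n = F'\cdot F^{n^{r+1}}$, which the paper compresses into a single equality.
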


\begin{proof}
We need to show that:
\[[F,F] \cdot K_{r,n}=[F,F] \cdot F^{(n^r)}\]

By induction on $r$. For $r=1$ the result is obvious. For $r+1$:
\[[F,F] \cdot K_{r+1,n}=[F,F]\cdot [K_{r,n},K_{r,n}]\cdot (K_{r,n})^n=  [F,F] \cdot (F^{(n^r)})^n=[F,F] \cdot F^{(n^{r+1})}\]

\end{proof}

The next proposition follows easily from the definitions.

\begin{proposition}
\label{char}
For $n\in \mathbb{N}$, denote $L_{r,n} :=\ker (\Phi _{r} \to \Phi _{r,n} )$. Then, $L_{r,n} $ is a characteristic subgroup of $\Phi _{r} $.
\end{proposition}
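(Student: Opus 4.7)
The plan is to prove the stronger statement that $L_{r,n}$ is fully invariant in $\Phi_r$, i.e., stable under every endomorphism of $\Phi_r$; since characteristic is a weaker property this suffices. The key idea is that the defining recursion $K_{s+1,n} = K'_{s,n}(K_{s,n})^n$ pushes forward to an analogous recursion inside $\Phi_r$ built from operations (derived subgroup, $n$-th power subgroup) that commute with arbitrary endomorphisms.

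First, I would recursively define a sequence of subgroups of $\Phi_r$ by $J_0 := \Phi_r$ and $J_{s+1} := J'_s \cdot J_s^n$, and check that $J_r = L_{r,n}$. Let $\pi\colon F \twoheadrightarrow \Phi_r$ be the quotient map. For any homomorphism and any subgroup $H$ one has the elementary identities $\pi([H,H]) = [\pi(H), \pi(H)]$ and $\pi(H^n) = \pi(H)^n$, so applying $\pi$ to the recursion for $K_{s,n}$ gives $\pi(K_{s+1,n}) = \pi(K_{s,n})' \cdot \pi(K_{s,n})^n$. Combined with $\pi(K_{0,n}) = \Phi_r = J_0$, a straightforward induction yields $\pi(K_{s,n}) = J_s$ for all $s \le r$. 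A routine parallel induction shows $F^{(s)} \subseteq K_{s,n}$ (inductive step: $F^{(s+1)} = [F^{(s)}, F^{(s)}] \subseteq [K_{s,n}, K_{s,n}] \subseteq K_{s+1,n}$), so at $s = r$ we obtain $J_r = \pi(K_{r,n}) = K_{r,n}/F^{(r)} = L_{r,n}$.

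Second, I would prove by induction on $s$ that $J_s$ is fully invariant in $\Phi_r$. The base case $J_0 = \Phi_r$ is trivial. For the inductive step, suppose $\varphi$ is any endomorphism of $\Phi_r$ with $\varphi(J_s) \subseteq J_s$. For $x,y \in J_s$ we have $\varphi([x,y]) = [\varphi(x), \varphi(y)] \in J'_s$ and $\varphi(x^n) = \varphi(x)^n \in J_s^n$; since $J'_s$ is generated by commutators of elements of $J_s$ and $J_s^n$ by $n$-th powers of elements of $J_s$, both subgroups are preserved by $\varphi$, and hence so is their product $J_{s+1}$. Taking $s = r$ yields that $L_{r,n} = J_r$ is fully invariant in $\Phi_r$, in particular characteristic.

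There is essentially no obstacle here; the proposition is a bookkeeping consequence of the verbal nature of the construction, which is why the authors note that it follows easily from the definitions. The only mild point to pay attention to is the slight distinction between showing $K_{r,n}$ fully invariant in $F$ (which is easy but not quite enough, since automorphisms of $\Phi_r$ need not lift to $F$) and actually running the argument inside $\Phi_r$ via the auxiliary sequence $J_s$.
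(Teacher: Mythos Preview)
Your argument is correct and is exactly the kind of unpacking the paper has in mind when it says the proposition ``follows easily from the definitions'' without giving a proof: the recursion $H \mapsto H'H^{n}$ is built from verbal operations, so iterating it starting from $\Phi_r$ yields fully invariant subgroups, and you correctly identify the $r$-th iterate with $L_{r,n}$. Your closing caveat is mildly overstated---since $F$ is free, any automorphism of $\Phi_r$ lifts to an \emph{endomorphism} of $F$, and full invariance of $K_{r,n}$ in $F$ would already suffice---but running the argument directly inside $\Phi_r$ as you do is perfectly clean.
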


We need now few notations:

\begin{definition}
For $r,n\in \mathbb{N} \cup \{0\} $ denote:

\begin{enumerate}
\item[-] $x_{r,n,1} ,\ldots ,x_{r,n,m} $ to be the $m$ standard generators of $\Phi _{r,n} $.

\item[-] $\mathbb{Z} _{n} [\Phi _{r,n} ]$ - the group ring of $\Phi _{r,n} $ over $\mathbb{Z} _{n}:=\mathbb{Z}/n\mathbb{Z}$.

\item[-] ${T}_{r,n} :=\mathbb{Z}_{n} [\Phi _{r,n} ]{t}_{r,n,1} +\ldots +\mathbb{Z}_{n} [\Phi _{r,n} ]{t}_{r,n,m} $, will be the $\mathbb{Z}_{n} [\Phi _{r,n} ]$ left free module with the base ${t}_{r,n,1} ,{t}_{r,n,2} ,\ldots ,{t}_{r,n,m} $. \newline

\item[-]  $R (\Phi_{r,n}) :=\left\{\left(\begin{array}{cc} {g} & {t} \\ {0} & {1} \end{array}\right)_{} \left|_{} g\in \Phi _{r,n} ,_{} t\in T_{r,n} \right. \right\}$.

\end{enumerate}
\end{definition}

The next proposition is a special case of Theorem \ref{magnus}.

\begin{proposition}
\label{base}
For $r,n\in  \mathbb{N} $, the correspondence: $x_{r+1,n,i} \leftrightarrow \left(\begin{array}{cc} {x_{r,n,i} } & {{t}_{r,n,i} } \\ {0} & {1} \end{array}\right)$ for $i=1,\ldots ,m$, gives a faithful representation of $\Phi _{r+1,n} $ as a subgroup of $R (\Phi_{r,n}) $.
\end{proposition}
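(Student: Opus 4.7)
The plan is to obtain Proposition \ref{base} as a direct instance of Theorem \ref{magnus}, by making the right substitutions. Specifically, I would take the normal subgroup $R$ in the statement of Theorem \ref{magnus} to be $R := K_{r,n} \trianglelefteq F$, which is normal (indeed, characteristic) by Proposition \ref{char} (applied inside $F$ itself, or directly from the inductive definition of $K_{r,n}$).

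With this choice, the construction of Theorem \ref{magnus} specializes as follows. First, $G = F/R = F/K_{r,n} = \Phi_{r,n}$, and under the identification of $F$-generators with $G$-generators, the $g_i$ of Theorem \ref{magnus} become the standard generators $x_{r,n,i}$ of $\Phi_{r,n}$. Second, $R' R^n = K'_{r,n}(K_{r,n})^n = K_{r+1,n}$ by the definition of $K_{r+1,n}$, so $\tilde{G}_n = F/R' R^n = F/K_{r+1,n} = \Phi_{r+1,n}$, and the $\tilde{g}_i$ become the standard generators $x_{r+1,n,i}$. Third, the group ring, the free module and the matrix group built from $G$ in Theorem \ref{magnus} are, for this choice of $R$, literally $\mathbb{Z}_n[\Phi_{r,n}]$, $T_{r,n}$ and $R(\Phi_{r,n})$ as defined right before the statement of Proposition \ref{base}.

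Once these identifications are written down, the map
\[
\alpha : \tilde{g}_i \longmapsto \begin{pmatrix} g_i & t_i \\ 0 & 1 \end{pmatrix}, \qquad i=1,\ldots,m,
\]
of part (1) of Theorem \ref{magnus} becomes exactly the correspondence
\[
x_{r+1,n,i} \longleftrightarrow \begin{pmatrix} x_{r,n,i} & t_{r,n,i} \\ 0 & 1 \end{pmatrix},
\]
and the faithfulness asserted by Theorem \ref{magnus}(1) is exactly the faithfulness asserted by the proposition.

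There is no real obstacle here beyond bookkeeping: the only thing to double-check is that the hypothesis of Theorem \ref{magnus} is satisfied for a non-free ambient group, which is fine since Theorem \ref{magnus} is stated for an arbitrary normal subgroup $R$ of the free group $F$, and $K_{r,n}$ qualifies. Thus the proposition follows immediately upon unwinding the notation, and no further argument is needed.
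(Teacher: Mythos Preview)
Your proposal is correct and matches the paper's approach exactly: the paper simply states that Proposition~\ref{base} is a special case of Theorem~\ref{magnus}, and your write-up spells out precisely the substitution $R=K_{r,n}$ (so $G=\Phi_{r,n}$, $R'R^n=K_{r+1,n}$, $\tilde G_n=\Phi_{r+1,n}$) that makes this work. The only minor remark is that Proposition~\ref{char} concerns $L_{r,n}\trianglelefteq\Phi_r$ rather than $K_{r,n}\trianglelefteq F$, but as you note, normality of $K_{r,n}$ in $F$ is immediate from its inductive definition (or from its being a kernel), so this does not affect the argument.
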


\begin{proposition}
\label{comp}
Denote by $\pi _{r,n} $ the natural homomorphism from $\Phi _{r} $ to $\Phi _{r,n} $. Then: every finite quotient $\pi :\Phi _{r} \to G$ whose order divides $n$ factorized through $\Phi _{r,n} $. I.e. there is $\tilde{\pi }$, $\Phi _{r} \stackrel{\pi _{r,n} }{\longrightarrow} \Phi _{r,n} \stackrel{\tilde{\pi }}{\longrightarrow} G$ such that: $\tilde{\pi }\circ \pi _{r,n} =\pi $. In particular, $\hat{\Phi }_{r} \cong \mathop{\underleftarrow{\lim } }\limits_{n\in {\rm N} } \Phi _{r,n}$.
\end{proposition}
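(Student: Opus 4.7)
The plan is to prove the factorization by induction on $r$, using the $r$th derived subgroup of the target group as the key tool to reduce to the inductive hypothesis.

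The base case $r=0$ is trivial since $\Phi_0$ is the trivial group. For the inductive step, I am given $\pi:\Phi_{r+1}\to G$ with $|G|\mid n$, and I let $\tilde\pi:F\to G$ denote its lift. The goal is to prove $K_{r+1,n}\subseteq\ker\tilde\pi$, which, since $K_{r+1,n}=K'_{r,n}(K_{r,n})^n$, is equivalent to proving that $\tilde\pi(K_{r,n})$ is abelian and of exponent dividing $n$. The exponent condition is automatic by Lagrange, since $\tilde\pi(K_{r,n})\le G$ and $|G|\mid n$. The real work is commutativity.

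For commutativity, I will exploit the derived series. Because $G$ is a quotient of $\Phi_{r+1}$, it is solvable of length at most $r+1$, so its $r$th derived subgroup $G^{(r)}$ is abelian. The quotient $G/G^{(r)}$ is then solvable of length at most $r$ and has order dividing $n$, so it is a quotient of $\Phi_r$ of order dividing $n$. By the inductive hypothesis, the composition $F\to G\to G/G^{(r)}$ factors through $\Phi_{r,n}$, which means $\tilde\pi(K_{r,n})\subseteq G^{(r)}$. Since $G^{(r)}$ is abelian, so is $\tilde\pi(K_{r,n})$, completing the argument that $K_{r+1,n}\subseteq\ker\tilde\pi$, i.e.\ $\tilde\pi$ descends through $\pi_{r+1,n}$.

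For the identification $\hat\Phi_r\cong\varprojlim_{n}\Phi_{r,n}$, I will verify two auxiliary points. First, the family $\{\Phi_{r,n}\}_{n\in\mathbb{N}}$ is genuinely an inverse system of quotients of $\Phi_r$: each $\Phi_{r,n}$ is a quotient of $\Phi_r$ because an easy induction shows $F^{(r)}\subseteq K_{r,n}$, and the chain $K_{r,m}\subseteq K_{r,n}$ whenever $n\mid m$ follows by another straightforward induction on $r$ (the inclusion is obvious for $r=0$, and propagates through $K'_{r,\cdot}(K_{r,\cdot})^{\cdot}$ since $n\mid m$ gives $(K_{r,m})^m\subseteq(K_{r,n})^n$). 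Second, this system is cofinal in all finite quotients of $\Phi_r$: by the factorization just proved, any $\pi:\Phi_r\to G$ with $G$ finite factors through $\Phi_{r,|G|}$. Cofinality then yields $\hat\Phi_r\cong\varprojlim_n\Phi_{r,n}$.

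The main obstacle is really the inductive step of the factorization: one has to find the right group through which to invoke the inductive hypothesis. The trick of passing to $G/G^{(r)}$ is the crucial idea, and once that is in place the argument reduces to two bookkeeping statements (abelianness from $G^{(r)}$ being abelian, and exponent from Lagrange) that together kill the defining relations of $K_{r+1,n}$. The rest of the proposition is formal once the inductive factorization and the inclusions $K_{r,m}\subseteq K_{r,n}$ are established.
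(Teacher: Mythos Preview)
Your proof is correct. Both your argument and the paper's are inductions on the solvability length that ultimately establish the same containment $K_{r,n}\subseteq R\cdot F^{(r)}$ (where $R=\ker\tilde\pi$), but the organization differs. The paper fixes $R$ once and for all and proves by induction on $l$ that $K_{l,n}\subseteq R\cdot F^{(l)}$ for every $l$, via the direct computation
\[
K_{l+1,n}=[K_{l,n},K_{l,n}]\,(K_{l,n})^{n}\subseteq [R F^{(l)},R F^{(l)}]\,(R F^{(l)})^{n}\subseteq R\cdot F^{(l+1)},
\]
using normality of $R$ and $F^{n}\subseteq R$. You instead induct on the proposition itself and manufacture a new quotient $G/G^{(r)}$ to which the inductive hypothesis applies; unwinding your conclusion $\tilde\pi(K_{r,n})\subseteq G^{(r)}$ gives precisely $K_{r,n}\subseteq R\cdot F^{(r)}$, so the two arguments coincide at the key step. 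Your phrasing is a bit more conceptual (it makes transparent \emph{why} the derived series enters), while the paper's is a shade more self-contained (no auxiliary quotient is introduced). You also spell out the cofinality and inverse-system checks for $\hat\Phi_r\cong\varprojlim_n\Phi_{r,n}$, which the paper leaves implicit.
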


\begin{proof}
Let $G$ be a quotient of $\Phi _{r}$  whose order divides $n$. Set $G=F/R$ where $R\geq F^{(r)}$ and $R\geq F^n$ -  i.e. $R\geq F^{(r)} \cdot  F^n$. We recall that $\Phi _{r,n}=F/K_{r,n}$. According to these notations we actually need to prove that $ R\geq K_{r,n}$. We will show by induction on $l$ that for every $l \in \mathbb{N}$ we have $R \cdot F^{(l)}\geq K_{l,n}$. Therefore, as $R\geq F^{(r)}$ we will deduce that in particular, $R\geq K_{r,n}$ as required.\newline

For $l=1$ we have $K_{1,n}=F'F^n$ and in this case we have:
\[K_{1,n} = F^n \cdot F'  \leq R \cdot F^{(1)}\]

For $l+1$:
\[K_{l+1,n}=[K_{l,n},K_{l,n}] \cdot (K_{l,n})^n \leq  [R \cdot F^{(l)},R \cdot F^{(l)}] \cdot (R \cdot F^{(l)})^n \leq (R \cdot F^{(l+1)}) \cdot R=R \cdot F^{(l+1)} \]

\end{proof}

The last proposition gives us an "explicit" description of the structure of $\hat{\Phi }_{r} $. For example:

\begin{corollary}
Let  $z_{n,1}, \dots, z_{n,m}$ be the standard generators of $(\mathbb{Z}_n)^m$, then:
\begin{enumerate}
\item[-] $\hat{\Phi }_{1} $$ \cong \mathop{\underleftarrow{\lim } }\limits_{n\in {\mathbb{N}} }\langle z_{n,1}, \dots, z_{n,m}  \rangle$.
\item[-] $\hat{\Phi }_{2} $$ \cong \mathop{\underleftarrow{\lim } }\limits_{n\in {\mathbb{N}} } \left\langle \left( \begin{array}{cc} {z_{n,1} } & {{t}_{1,n,1} } \\ {0} & {1} \end{array}\right) , \dots, \left( \begin{array}{cc} {z_{n,m} } & {{t}_{1,n,m} } \\ {0} & {1} \end{array}\right) \right\rangle $.
\item[-] $\hat{\Phi }_{3} $$ \cong \mathop{\underleftarrow{\lim } }\limits_{n\in {\mathbb{N}} } \left\langle\left( \begin{array}{cc} {\left( \begin{array}{cc} {z_{n,1} } & {{t}_{1,n,1} } \\ {0} & {1} \end{array}\right) } & {{t}_{2,n,1} } \\ {0} & {1} \end{array}\right) , \dots, \left( \begin{array}{cc} {\left( \begin{array}{cc} {z_{n,m} } & {{t}_{1,n,m} } \\ {0} & {1} \end{array}\right) } & {{t}_{2,n,m} } \\ {0} & {1} \end{array}\right) \right\rangle$
\end{enumerate}
\end{corollary}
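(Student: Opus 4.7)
The plan is to combine Proposition \ref{comp} with iterated applications of the Romanovskii embedding provided by Proposition \ref{base}. Proposition \ref{comp} already supplies the outer layer of the statement, namely $\hat{\Phi}_r \cong \varprojlim_{n \in \mathbb{N}} \Phi_{r,n}$, so all that remains is to identify, for $r=1,2,3$, a concrete faithful presentation of $\Phi_{r,n}$ in terms of the matrix groups $R(\Phi_{r-1,n})$, and then feed those presentations into the inverse limit.

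First I would dispose of the base case $r=1$. By definition $K_{1,n}=F'F^n$, so $\Phi_{1,n}=F/F'F^n$ is the free abelian group on $m$ generators of exponent $n$, i.e.\ $(\mathbb{Z}_n)^m$. Under this identification the standard generators $x_{1,n,i}$ correspond exactly to $z_{n,i}$, yielding the first line of the corollary.

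For $r=2$, I would apply Proposition \ref{base} with $r$ replaced by $1$: the correspondence
\[
x_{2,n,i} \longleftrightarrow \left(\begin{array}{cc} x_{1,n,i} & t_{1,n,i} \\ 0 & 1 \end{array}\right), \qquad i=1,\ldots,m,
\]
gives a faithful representation of $\Phi_{2,n}$ inside $R(\Phi_{1,n})$. Substituting $x_{1,n,i}=z_{n,i}$ from the base case produces the matrices in the second line. For $r=3$, I would apply Proposition \ref{base} once more, this time with $r$ replaced by $2$, to embed $\Phi_{3,n}$ faithfully in $R(\Phi_{2,n})$ via $x_{3,n,i}\mapsto \bigl(\begin{smallmatrix} x_{2,n,i} & t_{2,n,i} \\ 0 & 1 \end{smallmatrix}\bigr)$, and then plug in the already-obtained matrix form of $x_{2,n,i}$ to get the nested-matrix generators of the third line.

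There is essentially no obstacle: each step is a direct application of a previously stated result, and the only content beyond that is the identification $\Phi_{1,n}\cong(\mathbb{Z}_n)^m$, which is immediate from the definition of $K_{1,n}$. The slight care needed is just to note that Proposition \ref{base} produces isomorphisms of $\Phi_{r,n}$ onto the displayed subgroups generated by those matrices (not merely epimorphisms), so that taking $\varprojlim_n$ of these faithful presentations preserves the isomorphism $\hat{\Phi}_r\cong\varprojlim_n \Phi_{r,n}$ supplied by Proposition \ref{comp}.
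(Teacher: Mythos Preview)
Your proposal is correct and mirrors the paper's own treatment: the corollary is stated there without proof, as an immediate illustration of Proposition~\ref{comp} combined with the iterated Romanovskii embeddings of Proposition~\ref{base} and the identification $\Phi_{1,n}\cong(\mathbb{Z}_n)^m$. There is nothing to add.
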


By using this "explicit" description of $\hat{\Phi }_{r} $, we will show a few properties of $\hat{\Phi }_{r} $, which will help us later solve the congruence subgroup problem for $\Phi _{2,2}$, the free metabelian group on two generators.\newline

\begin{proposition}
\label{order}
Let $n \in \mathbb{N}$ and let  $x_{r,n,i} $ be one of the generators of $\Phi _{r,n} $. Then, $O(x_{r,n,i} )$, the order of $x_{r,n,i} $, is equals $n^{r} $.
\end{proposition}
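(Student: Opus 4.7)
The plan is to bound the order $O(x_{r,n,i})$ from both sides, extracting the lower bound from Proposition \ref{ab} and the upper bound from a short induction on the definition of $K_{r,n}$.

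For the lower bound, I would observe that $x_{r,n,i}$ projects onto a standard generator of the abelianization $\Phi_{r,n}^{ab}$. By Proposition \ref{ab}, this abelianization is isomorphic to $(\mathbb{Z}/n^r\mathbb{Z})^m$, and in this group each standard generator has order exactly $n^r$. Since any homomorphic image of an element has order dividing its own, we get $O(x_{r,n,i}) \geq n^r$.

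For the upper bound, let $f_i$ denote the $i$-th free generator of $F$; it suffices to prove that $f_i^{n^r} \in K_{r,n}$, since then $x_{r,n,i}^{n^r} = 1$ in $\Phi_{r,n}=F/K_{r,n}$. This I would establish by induction on $r$. The base case $r=0$ is trivial because $K_{0,n}=F$. For the inductive step, assume $f_i^{n^r}\in K_{r,n}$. By definition $K_{r+1,n}=K'_{r,n}\,(K_{r,n})^{n}$, so in particular $(K_{r,n})^{n} \leq K_{r+1,n}$. Raising the inductive hypothesis to the $n$-th power gives
\[
f_i^{n^{r+1}} = \bigl(f_i^{n^r}\bigr)^{n} \in (K_{r,n})^{n} \leq K_{r+1,n},
\]
which closes the induction. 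Hence $O(x_{r,n,i}) \leq n^r$, and combining the two bounds yields the claimed equality.

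There is no real obstacle here: the lower bound is essentially a corollary of the already-proved Proposition \ref{ab}, and the upper bound follows from the obvious containment $(K_{r,n})^{n}\leq K_{r+1,n}$ built into the definition of $K_{r+1,n}$. The only subtle point is the notational clash between $F^{(r)}$ meaning the derived series in Section~1 and meaning the power subgroup in Proposition~\ref{ab}; in the present argument we work with $K_{r,n}$ directly and need no interpretation of that notation.
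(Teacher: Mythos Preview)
Your argument is correct and takes a genuinely different route from the paper. The paper works entirely inside the Romanovskii embedding: it writes $x_{r+1,n,i}=\left(\begin{smallmatrix} x_{r,n,i} & t_{r,n,i} \\ 0 & 1 \end{smallmatrix}\right)$, computes the $k$-th power explicitly for $k=k_1 n^r+k_2$ with $1\le k_2\le n^r$, and reads off both bounds from the vanishing conditions on the two upper entries (the upper-left forces $k_2=n^r$ by induction, and then the upper-right coefficient $(k_1+1)\sum_{j=0}^{n^r-1}(x_{r,n,i})^j$ in the free $\mathbb{Z}_n[\Phi_{r,n}]$-module $T_{r,n}$ first vanishes at $k_1=n-1$). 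Your proof bypasses the matrix representation entirely: the lower bound comes for free from the abelianization computed in Proposition~\ref{ab}, and the upper bound from the bare containment $(K_{r,n})^n\le K_{r+1,n}$. This is shorter and conceptually cleaner; the paper's calculation, by contrast, is self-contained (it does not invoke Proposition~\ref{ab}) and showcases the Romanovskii machinery in a way that prepares the reader for the more delicate matrix computations appearing later in the section.
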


\begin{proof}
By induction on $r$. For $r=0$, the result is obvious. For  $r+1$: by proposition \ref{base}, one can write: $x_{r+1,n,i} =\left(\begin{array}{cc} {x_{r,n,i} } & {{t}_{r,n,i} } \\ {0} & {1} \end{array}\right)$. By the induction hypothesis: $O(x_{r,n,i} )=n^{r} $. For $0<k\in \mathbb{N} $ we can uniquely write: $k=k_{1} n^{r} +k_{2} $, when: $k_{1} \in  \mathbb{N} \cup \{0\}  $ and $1\le k_{2} \le n^{r} $. Thus, simple calculation shows that for $0<k\in \mathbb{N} $:
\[\left(x_{r+1,n,i} \right)^{k} =\left(\begin{array}{cc} {x_{r,n,i} } & {{t}_{r,n,i} } \\ {0} & {1} \end{array}\right)^{k} =\]
\[=\left(\begin{array}{cc} {\left(x_{r,n,i} \right)^{k_{1} n^{r} +k_{2} } } & {\left(\sum _{j=0}^{k_{1} n^{r} +k_{2} -1}\left(x_{r,n,i} \right)^{j}  \right) {t}_{r,n,i} } \\ {0} & {1} \end{array}\right)=\]
\[=\left(\begin{array}{cc} {\left(x_{r,n,i} \right)^{k_{2} } } & {\left(1+\ldots +\left(x_{r,n,i} \right)^{k_{2} -1} +k_{1} \left(1+\ldots +\left(x_{r,n,i} \right)^{n^{r} -1} \right)\right) {t}_{r,n,i} } \\ {0} & {1} \end{array}\right) .\]

Looking at the left upper coordinate, one can see that by the induction hypothesis $k_{2} =n^{r} $ is a necessary condition for $x_{r+1,n,i}$ to satisfy $\left(x_{r+1,n,i} \right)^{k} =e$. Therefore, if we want $x_{r+1,n,i}$ to satisfy $\left(x_{r+1,n,i} \right)^{k} =e$, we need that the right upper coordinate will equal $0$ under the condition $k_{2} =n^{r} $, i.e. $(k_{1} +1)\left(1+\ldots +\left(x_{r,n,i} \right)^{n^{r} -1} \right)=0$. The right upper coordinate belongs to ${T}_{r,n} $ which is a free module over $\mathbb{Z} _{n} [\Phi _{r,n} ]$, and therefore, it happens for the first time when $k_{1} =n-1$, i.e. $k=(n-1)n^{r} +n^{r} =n^{r+1} $.

\end{proof}

\begin{proposition}
\label{center}
Let $\hat{\Phi }_{r} $ be the free profinite solvable group on $m\ge 2$ generators of solvability length $r\ge 2$. Then, $Z(\hat{\Phi }_{r} )$, the center of $\hat{\Phi }_{r} $, is trivial.
\end{proposition}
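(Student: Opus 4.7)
The plan is to induct on $r \geq 2$, using the inverse-limit description $\hat{\Phi}_r \cong \varprojlim \Phi_{r,n}$ of Proposition \ref{comp} and the Romanovskii embedding of Proposition \ref{base}. Any $z \in Z(\hat{\Phi}_r)$ is a compatible tuple $(z_n)$ with each $z_n \in Z(\Phi_{r,n})$. Writing $z_n = \bigl(\begin{smallmatrix} g_n & \sum_i a_{i,n} t_{r-1,n,i} \\ 0 & 1 \end{smallmatrix}\bigr)$ inside $R(\Phi_{r-1,n})$ and equating entries in the identity $z_n\,x_{r,n,j} = x_{r,n,j}\,z_n$, I get $g_n \in Z(\Phi_{r-1,n})$ together with
\[(x_{r-1,n,j} - 1)\,a_{j,n} = g_n - 1, \qquad (x_{r-1,n,j} - 1)\,a_{i,n} = 0 \quad (i \neq j).\]

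For the inductive step $r \geq 3$, the canonical surjection $\hat{\Phi}_r \twoheadrightarrow \hat{\Phi}_{r-1}$ maps centers into centers, so by the inductive hypothesis the image of $z$ is trivial and thus $g_n = e$ in $\Phi_{r-1,n}$ for every $n$. The relations above then force each $a_{i,n}$ to be annihilated by every $(x-1)$ with $x \in \Phi_{r-1,n}$, i.e., to be invariant under right multiplication by $\Phi_{r-1,n}$, and hence a $\mathbb{Z}_n$-multiple $c_{i,n}\,\sigma_{r-1,n}$ of the group-sum $\sigma_{r-1,n} := \sum_{g\in \Phi_{r-1,n}} g$.

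The base case $r = 2$ proceeds along the same lines, but $g_n$ must be eliminated by hand since no inductive hypothesis is available for $\hat{\Phi}_1 \cong \hat{\mathbb{Z}}^m$. Combining the two families of relations (choose any $j' \neq j$, available because $m \geq 2$) gives $(x_{1,n,j'} - 1)(g_n - 1) = 0$ in $\mathbb{Z}_n[\Phi_{1,n}]$ for every generator index $j'$. A short case analysis on the four group elements $x_{1,n,j'}\,g_n$, $x_{1,n,j'}$, $g_n$, $e$ and their possible coincidences (varying $j'$ to dispose of each coincidence, which is possible because $m \geq 2$) forces $g_n = e$ for every $n \geq 2$; thereafter the reasoning of the inductive step gives $a_{i,n} = c_{i,n}\,\sigma_{1,n}$.

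To finish, fix any $k \geq 2$ and project $z_{k^2} \mapsto z_k$ along $\Phi_{r,k^2} \twoheadrightarrow \Phi_{r,k}$. Counting preimages sends $\sigma_{r-1,k^2}$ to $\tfrac{|\Phi_{r-1,k^2}|}{|\Phi_{r-1,k}|}\,\sigma_{r-1,k}$, and Proposition \ref{finite} implies that $|\Phi_{r-1,n}|$ is a positive power of $n$, so this ratio is divisible by $k$ and the expression vanishes once coefficients are reduced modulo $k$. Therefore $z_k = e$ for every $k$, whence $z = e$. I expect the delicate part to be the base case: the coefficient-counting that rules out $g_n \neq e$ is the only genuinely computational step, and the hypothesis $m \geq 2$ enters crucially there (indeed, for $m = 1$ the group $\hat{\Phi}_2 \cong \hat{\mathbb{Z}}$ is abelian and the conclusion fails).
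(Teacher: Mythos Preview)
Your argument is correct, but it is organized differently from the paper's proof. The paper does \emph{not} induct on $r$ and, crucially, handles the two unknowns in the opposite order: it first shows all $a_{n',i'}=0$ and only afterwards deduces $g_{n'}=e$, using the image criterion of Theorem~\ref{magnus}(2) (namely $1-g=\sum_i a_i(1-g_i)$). To kill the $a$'s it uses invariance under the single cyclic subgroup $\langle x_{r-1,n'^2,j}\rangle$ (for one $j\neq i'$) together with Proposition~\ref{order} on the order of a generator, rather than invariance under the whole group and Proposition~\ref{finite} as you do. The payoff of the paper's route is that the fiddly base-case coefficient analysis you carry out to force $g_n=e$ when $r=2$ disappears entirely: once all $a_{n,i}$ vanish, $g_n=e$ is immediate from Theorem~\ref{magnus}(2). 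Your route, by contrast, makes the role of the hypothesis $m\geq 2$ more visible (it enters in your base case exactly where the statement would fail for $m=1$), and your use of the full group sum $\sigma_{r-1,n}$ is conceptually tidy. Both arguments share the key projection idea $n'^2\to n'$; they simply trade Proposition~\ref{order} for Proposition~\ref{finite} and swap the order in which $g_n$ and the $a_{i,n}$ are eliminated.
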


\begin{proof}
Denote by  $\hat{x}_{r,1} ,\ldots ,\hat{x}_{r,m} $ the standard generators of $\hat{\Phi }_{r} $ as profinite group, i.e. $\hat{x}_{r,i} =\left(\left(\begin{array}{cc} {x_{r-1,n,i} } & {{t}_{r-1,n,i} } \\ {0} & {1} \end{array}\right)\right)_{n=1}^{\infty } $ for $1\le i\le m$. We will show that $\bigcap _{i=1}^{m}Z_{\hat{\Phi }_{r} } \left(\hat{x}_{r,i} \right) =\left\{e\right\}$ (when, $Z_{\hat{\Phi }_{r} } \left(\hat{x}_{r,i} \right)$ is the center of $\hat{x}_{r,i} $ in $\hat{\Phi }_{r} $), and this actually means that: $Z(\hat{\Phi }_{r} )=\left\{e\right\}$.\newline

Let $\left(\left(\begin{array}{cc} {g_{n} } & {\sum _{i=1}^{m}a_{n,i} {t}_{r-1,n,i}  } \\ {0} & {1} \end{array}\right)\right)_{n=1}^{\infty } $ be an element of $ \bigcap _{i=1}^{m}Z_{\hat{\Phi }_{r} } \left(\hat{x}_{r,i} \right) $ such that for every $n'\in \mathbb{N} $, $\left(\begin{array}{cc} {g_{n'} } & {\sum _{i=1}^{m}a_{n',i} {t}_{r-1,n',i}  } \\ {0} & {1} \end{array}\right)\in \Phi _{r,n'} $. We will show that $a_{n',i'} =0$ and $g_{n'} =e$ for every $n'\in  \mathbb{N} $ and $1\le i'\le m$.\newline

Assume negatively that for the element $\left(\left(\begin{array}{cc} {g_{n} } & {\sum _{i=1}^{m}a_{n,i} {t}_{r-1,n,i}  } \\ {0} & {1} \end{array}\right)\right)_{n=1}^{\infty } $ there are $n'\in  \mathbb{N} $ and $1\le i'\le m$ such that $a_{n',i'}  \ne 0$. Consider $\Phi _{r,n'^{2} } $. Now, take any $j\ne i'$, and from the condition that $\left(\left(\begin{array}{cc} {g_{n} } & {\sum _{i=1}^{m}a_{n,i} {t}_{r-1,n,i}  } \\ {0} & {1} \end{array}\right)\right)_{n=1}^{\infty } \in Z_{\hat{\Phi }_{r} } \left(\hat{x}_{r,j} \right)$, we have:

\[\left(\begin{array}{cc} {g_{n'^{2} } } & {\sum_{i=1}^{m} a_{n'^{2} ,i} {t}_{r-1,n'^2,i}  } \\ {0} & {1} \end{array}\right)\left(\begin{array}{cc} {x_{r-1,n'^{2} ,j} } & {{t}_{r-1,n'^2,j} } \\ {0} & {1} \end{array}\right)=\]
\[=\left(\begin{array}{cc} {x_{r-1,n'^{2} ,j} } & {{t}_{r-1,n'^2,j} } \\ {0} & {1} \end{array}\right)\left(\begin{array}{cc} {g_{n'^{2} } } & {\sum_{i=1}^{m} a_{n'^{2} ,i} {t}_{r-1,n'^2,i}  } \\ {0} & {1} \end{array}\right).\]

After opening the last multiplication and comparing the coefficients of ${t}_{r-1,n'^2,i'} $ one can conclude that $a_{n'^{2} ,i'} =x_{r-1,n'^{2} ,j} a_{n'^{2} ,i'} $. As remembered, $a_{n'^{2} ,i'}\in \mathbb{Z} _{n'^{2} } [\Phi _{r-1,n'^{2} } ]$, and thus we can write $a_{n'^{2} ,i'} =\sum _{g\in \Phi _{r-1,n'^{2} } }\alpha _{g} g $ when $\alpha _{g} \in \mathbb{Z} _{n'^{2} } $ for every $g\in \Phi _{r-1,n'^{2} } $. Therefore, the equality $a_{n'^{2} ,i'} =x_{r-1,n'^{2} ,j} a_{n'^{2} ,i'} $, deduces that for every $g,h\in \Phi _{r-1,n'^{2} } $: $h\in \left\langle x_{r-1,n'^{2} ,j} \right\rangle g \Rightarrow \alpha _{g} =\alpha _{h}$. Now, according to proposition \ref{order}, $O\left(x_{r-1,n'^{2} ,j} \right)=\left(n'^{2} \right)^{r-1} $. Therefore, by denoting the representatives of the right cosets of $\left\langle x_{r-1,n'^{2} ,j} \right\rangle $ in the group $\Phi _{r-1,n'^{2} } $ in $\left\{g_{u} \right\}_{u\in U} $, one can write:

\noindent
\[a_{n'^{2} ,i'} =\sum _{g\in \Phi _{r-1,n'^{2} } }\alpha _{g} g =\sum _{u\in U}\alpha _{g_{u} } \left(\sum _{l=0}^{\left(n'^{2} \right)^{r-1} -1}\left(x_{r-1,n'^{2} ,j} \right)^{l}  \right) g_{u} .\]

Now, according to the inverse limit property, the natural map $\Phi _{r-1,n'^{2} } \to \Phi _{r-1,n'} $ induces a natural map $\mathbb{Z} _{n'^{2} } [\Phi _{r-1,n'^{2} } ]\to \mathbb{Z} _{n'} [\Phi _{r-1,n'} ]$, that under this map $a_{n'^{2} ,i'} \mapsto a_{n',i'} $. Considering that $O(x_{r-1,n',j} )=n'^{r-1} $, and that $a_{n',i'} \in \mathbb{Z} _{n'} [\Phi _{r-1,n'} ]$, we deduce that under this map:

\[\sum _{l=0}^{n'^{2\left(r-1\right)} -1}\left(x_{r-1,n'^{2} ,j} \right)^{l}  \mapsto \sum _{l=0}^{n'^{2\left(r-1\right)} -1}\left(x_{r-1,n',j} \right)^{l}  =n'^{r-1} \sum _{l=0}^{n'^{r-1} -1}\left(x_{r-1,n',j} \right)^{l}  =0.\]

Thus, $a_{n'^{2} ,i'} \mapsto 0$, and therefore $a_{n',i'} =0$.\newline

Now, after we proved that $a_{n',i'} =0$ for every $n'\in \mathbb{N} $ and for every $1\le i'\le m$, it is obvious that according to Theorem \ref{magnus}, also $g_{n'} =e$ for every $n'\in \mathbb{N} $.

\end{proof}

The following theorem is well known in more general cases ([G] Theorem 7.1, [Rob] Theorem 9.44). But one can notice that using the explicit description we gave for $\hat{\Phi} _{r} $, it is easy to give another proof for our specific case.

\begin{theorem}
\label{residual}
For every  $r\ge 0$, $\Phi _{r} $ is residually finite $p$ for all primes $p$.
\end{theorem}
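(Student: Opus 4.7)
The plan is to prove, by induction on $r$, the stronger statement $\bigcap_{k \ge 1} K_{r,p^k} = F^{(r)}$. Since $\Phi_{r,p^k} = F/K_{r,p^k}$ is a $p$-group by Proposition \ref{finite}, this exhibits $\Phi_r = F/F^{(r)}$ as residually $p$-finite. The case $r = 0$ is trivial; for the inductive step, take $g \in F \setminus F^{(r+1)}$ and seek $k$ with $g \notin K_{r+1,p^k}$.

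Since $K_{r+1,p^k} \subseteq K_{r,p^k}$, there is a natural surjection $\Phi_{r+1,p^k} \twoheadrightarrow \Phi_{r,p^k}$, so if $g \notin F^{(r)}$ the induction hypothesis already furnishes such a $k$. This reduces matters to $g \in F^{(r)} \setminus F^{(r+1)}$, which I would address via Theorem \ref{magnus} with $R = F^{(r)}$ and $n = 0$: the resulting faithful embedding of $\Phi_{r+1} = F/F^{(r+1)}$ into matrices over $\mathbb{Z}[\Phi_r]$ sends $g F^{(r+1)}$ to $\left(\begin{array}{cc} 1 & t_g \\ 0 & 1 \end{array}\right)$ with $t_g \neq 0$. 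Writing $t_g = \sum_i a_i t_i$, I fix a non-zero coordinate $a_{i_0} = \sum_h c_h h$ and some $h_0$ in its finite support with $c_{h_0} \neq 0$.

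For each $h \neq h_0$ in the support, $h h_0^{-1}$ is non-identity in $\Phi_r$, so by the induction hypothesis its image in $\Phi_{r,p^k}$ is non-trivial for some $k$. Taking the maximum of these finitely many $k$'s (the filtration $\{K_{r,p^k}\}$ is monotone in $k$), and also requiring $p^k > |c_{h_0}|$, I obtain a single $k$ for which the image $\bar{a}_{i_0}$ of $a_{i_0}$ in $\mathbb{Z}_{p^k}[\Phi_{r,p^k}]$ still has the non-zero summand $\bar{c}_{h_0} \bar{h}_0$. Naturality of the Romanovskii construction places the embeddings of Theorem \ref{magnus} (for $n=0$) and Proposition \ref{base} (for $p^k$) into a commutative square with the quotient $\Phi_{r+1} \to \Phi_{r+1,p^k}$ and the coefficient reduction $\mathbb{Z}[\Phi_r] \to \mathbb{Z}_{p^k}[\Phi_{r,p^k}]$; so the image of $g$ in $R(\Phi_{r,p^k})$ has upper-right entry $\neq 0$, whence $g \notin K_{r+1,p^k}$.

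The main obstacle is the naturality statement invoked at the end: that the Romanovskii constructions for different coefficient rings assemble into a commutative square under the natural projections. This should follow mechanically from the generator-by-generator definition of $\alpha$ in Theorem \ref{magnus}, but it requires explicit verification.
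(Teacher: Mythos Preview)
Your proof is correct. The paper does not actually supply a proof of this theorem: it cites the general results in [G] and [Rob] and merely remarks that the explicit description of $\hat{\Phi}_r$ developed in \S2 makes it ``easy to give another proof for our specific case.'' Your argument is precisely a realization of that hint --- using the $p$-group quotients $\Phi_{r,p^k}$ together with the Romanovskii embedding to separate points by induction on $r$. The monotonicity $K_{r,p^{k+1}}\subseteq K_{r,p^k}$ you invoke follows by an easy induction on $r$ from $n\mid m \Rightarrow K_{r,m}\subseteq K_{r,n}$, and the ``naturality'' square you worry about is genuinely routine: both embeddings send the $i$-th generator to $\left(\begin{smallmatrix} x_{r,\ast,i} & t_{r,\ast,i}\\ 0 & 1\end{smallmatrix}\right)$, so commutativity holds on generators and hence everywhere.
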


\section{Methods from the solution of the CSP for $F_2$}

As mentioned at $\S0$, Bux, Ershov and Rapinchuk [BER] showed that $F_{2} $ has the CSP. In this section, we quote one of their results. We will use it in the opposite way: to show that $\Phi $, the free metabelian group on two generators, does \textbf{not} have the CSP.

\begin{proposition}[\text{[BER]}, Lemma 3.1]
Let $G$ be a finitely generated group. Then:
\begin{enumerate}
\item $G$ has the CSP if and only if the natural map $\hat{i}:\widehat{Aut(G)} \to Aut(\hat{G})$ is injective.
\item The map $\hat{i}$ induces $\hat{j}:\widehat{Out(G)} \to Out(\hat{G})$, and $\hat{j}$ is injective if and only if every subgroup of $Out(G)$ of finite index contains a subgroup of $Out(G)$ of the form $\Delta [K]:=\ker \left(OutG\to Out\left(G/K\right)\right)$, such that $K$ is a characteristic subgroup of $G$ of finite index.
\item Asumme $G$ is also residually finite, and that $\hat{G}$ is centerless. Then: $\hat{i}:\widehat{Aut(G)} \to Aut(\hat{G})$ is injective if and only if $\hat{j}:\widehat{Out(G)} \to Out(\hat{G})$ is injective.
\end{enumerate}
\end{proposition}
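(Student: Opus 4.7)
My plan treats the three parts in order. For (1), I would first observe that $Aut(\hat G)$ carries a natural topology generated by the open subgroups
\[
U_{\bar K} := \{\phi \in Aut(\hat G) : \phi \text{ acts trivially on } \hat G/\bar K\},
\]
where $\bar K$ ranges over the closures in $\hat G$ of characteristic finite-index subgroups $K$ of $G$. Under the canonical map $\iota:Aut(G)\to Aut(\hat G)$, the pullback of $U_{\bar K}$ is precisely $\Gamma[K]$, so the congruence topology on $Aut(G)$ coincides with the topology pulled back from $Aut(\hat G)$. The universal property of profinite completion extends $\iota$ to $\hat i$, and $\ker\hat i$ equals the intersection in $\widehat{Aut(G)}$ of the open subgroups coming from congruence subgroups of $Aut(G)$. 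Hence $\hat i$ is injective iff the congruence subgroups form a neighborhood basis of the identity in the profinite topology of $Aut(G)$, i.e.\ iff every finite-index subgroup of $Aut(G)$ contains some $\Gamma[K]$. That is the CSP.

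For (2), I would first construct $\hat j$: since $G\to\hat G$ sends $Inn(G)$ into $Inn(\hat G)$, $\iota$ descends to $\bar\iota:Out(G)\to Out(\hat G)$, and the universal property of the profinite completion produces $\hat j$ lifting $\bar\iota$. Commutativity with $\hat i$ and the quotient maps from $Aut$ to $Out$ is then automatic. The equivalence is the verbatim analogue of (1): $\hat j$ is injective iff the topology on $Out(G)$ pulled back from $Out(\hat G)$, whose basic neighborhoods of $1$ are exactly the subgroups $\Delta[K]$, coincides with the profinite topology on $Out(G)$.

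For (3) the strategy is a diagram chase on
\[
\begin{array}{ccccccc}
\widehat{Inn(G)} & \xrightarrow{\alpha} & \widehat{Aut(G)} & \xrightarrow{\beta} & \widehat{Out(G)} & \to & 1 \\
\downarrow \phi & & \downarrow \hat i & & \downarrow \hat j & & \\
1 \to Inn(\hat G) & \xrightarrow{\gamma} & Aut(\hat G) & \xrightarrow{\delta} & Out(\hat G) & \to & 1
\end{array}
\]
in which the top row is right-exact (profinite completion is right-exact on groups) and the bottom row is exact. My first task is to verify that $\phi$ is an isomorphism: residual finiteness of $G$ combined with $Z(\hat G)=1$ forces $Z(G)=1$ (central elements of the dense subgroup $G$ are automatically central in $\hat G$), so $Inn(G)\cong G$ and $\widehat{Inn(G)}\cong \hat G=Inn(\hat G)$, with $\phi$ the canonical identification. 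Given this, $\hat i$ injective $\Rightarrow$ $\hat j$ injective follows from a short chase: lift $\bar x\in\ker\hat j$ to $x\in\widehat{Aut(G)}$; then $\hat i(x)\in Inn(\hat G)=\hat i(\alpha(\widehat{Inn(G)}))$, so injectivity of $\hat i$ places $x\in\alpha(\widehat{Inn(G)})\subseteq\ker\beta$, giving $\bar x=1$. The reverse direction is the subtler step, because in the profinite setting $\ker\beta$ is only the closure of the image of $\alpha$: given $x\in\ker\hat i$, injectivity of $\hat j$ forces $\beta(x)=1$, so I would write $x=\lim\alpha(y_i)$ and use that $\phi(y_i)=\hat i\alpha(y_i)\to 1$ with $\phi$ a homeomorphism to conclude $y_i\to 1$, hence $x=1$. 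The main obstacle I anticipate is verifying right-exactness of the top row carefully and confirming that $\phi$ is a topological isomorphism, not merely an algebraic one, under the stated hypotheses.
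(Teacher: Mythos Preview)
The paper does not prove this proposition at all: it is quoted verbatim from [BER], Lemma 3.1, and used as a black box. The surrounding text says explicitly ``In this section, we quote one of their results,'' and the proposition carries the attribution ``[BER], Lemma 3.1'' with no proof following. So there is no proof in the paper for your proposal to be compared against.

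That said, your sketch is a reasonable reconstruction of how such a lemma is typically established. Parts (1) and (2) are the standard identification of the congruence kernel with $\ker\hat i$ (respectively $\ker\hat j$), and your description is correct. For part (3) your diagram chase is the right idea; the one point to be careful about is your claim that $\widehat{Inn(G)}\cong Inn(\hat G)$ via $\phi$. You argue $Z(G)=1$ (correct, using density of $G$ in $\hat G$ and $Z(\hat G)=1$), hence $Inn(G)\cong G$, hence $\widehat{Inn(G)}\cong\hat G\cong Inn(\hat G)$. The last isomorphism $\hat G\cong Inn(\hat G)$ uses $Z(\hat G)=1$ again. What you should also check is that the profinite topology on $Inn(G)$ inherited from $Aut(G)$ agrees with its intrinsic profinite topology as an abstract group (equivalently, that $G\cong Inn(G)$ is closed in the profinite topology of $Aut(G)$, or that the induced and intrinsic topologies on $G$ coincide); otherwise $\widehat{Inn(G)}$ computed as a subgroup of $\widehat{Aut(G)}$ could differ from the abstract profinite completion of $G$. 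This is exactly the sort of issue [BER] handles, and it is where residual finiteness and the centerless hypothesis do real work. Your closure/limit argument for the reverse implication in (3) is fine once $\phi$ is known to be a topological isomorphism.
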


This proposition, together with propositions \ref{char}, \ref{comp},  \ref{center} and Theorem \ref{residual}, give:

\begin{lemma}
\label{conc}
Let $\Phi_r$ be the free solvable group on $m>1$ generators of solvability length $r>0$. Then, $\Phi_r$ has the CSP if and only if every subgroup of $Out(\Phi_r)$ of finite index contains a subgroup of the form $\ker (Out(\Phi_r) \to Out(\Phi_{r,n}))$ for some $n\in \mathbb{N}$.
\end{lemma}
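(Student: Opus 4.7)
The plan is to chain together the three parts of the quoted Proposition of Bux--Ershov--Rapinchuk with the structural results already established, and then to show that the specific congruence subgroups $\ker(Out(\Phi_r)\to Out(\Phi_{r,n}))$ are cofinal among the general congruence subgroups $\Delta[K]$ appearing in part (2) of that proposition.

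First I would verify the hypotheses of part (3) of the BER proposition in order to pass from $\widehat{Aut(\Phi_r)}\to Aut(\hat\Phi_r)$ to $\widehat{Out(\Phi_r)}\to Out(\hat\Phi_r)$: residual finiteness of $\Phi_r$ is Theorem~\ref{residual}, and centrality of $\hat\Phi_r$ is Proposition~\ref{center} (valid for $r\ge 2$, $m\ge 2$). Combining parts (1) and (3) of the BER proposition then reduces the CSP for $\Phi_r$ to the injectivity of $\hat j:\widehat{Out(\Phi_r)}\to Out(\hat\Phi_r)$. The case $r=1$ does not fall under Proposition~\ref{center}, but there $\Phi_1$ is abelian, so $Aut(\Phi_1)=Out(\Phi_1)$ and $Aut(\hat\Phi_1)=Out(\hat\Phi_1)$; hence $\hat i=\hat j$ and parts (1)--(2) of the BER proposition alone give the equivalence.

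Next, by part (2) of the BER proposition, $\hat j$ is injective if and only if every finite index subgroup of $Out(\Phi_r)$ contains some $\Delta[K]$, where $K$ ranges over characteristic subgroups of finite index in $\Phi_r$. The remaining task is to replace this arbitrary family by the concrete family $\{\ker(Out(\Phi_r)\to Out(\Phi_{r,n}))\}_{n\in\mathbb N}$. For one direction, Proposition~\ref{char} gives that $L_{r,n}:=\ker(\Phi_r\to\Phi_{r,n})$ is itself a characteristic finite index subgroup, so $\ker(Out(\Phi_r)\to Out(\Phi_{r,n}))=\Delta[L_{r,n}]$ is already of the required BER form. For the converse direction, given any characteristic $K\trianglelefteq \Phi_r$ of finite index $n$, Proposition~\ref{comp} shows that the quotient $\Phi_r\twoheadrightarrow \Phi_r/K$ factors through $\Phi_{r,n}$, i.e.\ $L_{r,n}\subseteq K$; consequently any automorphism of $\Phi_r$ acting trivially on $\Phi_{r,n}=\Phi_r/L_{r,n}$ acts trivially on the further quotient $\Phi_r/K$, so $\Delta[L_{r,n}]\subseteq \Delta[K]$ inside $Out(\Phi_r)$. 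This cofinality packages the two equivalences into the statement of the lemma.

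There is no real obstacle here beyond bookkeeping: the nontrivial inputs (the BER proposition, centerlessness of $\hat\Phi_r$, residual finiteness, and the factorisation Proposition~\ref{comp}) have all been established, and it only remains to assemble them. The one point that requires a moment of care is handling the abelian case $r=1$ separately, since Proposition~\ref{center} assumes $r\ge 2$; this is resolved by the observation that $Out=Aut$ in that case.
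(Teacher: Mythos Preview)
Your proposal is correct and matches the paper's approach exactly: the paper's proof is the single clause ``This proposition, together with propositions~\ref{char}, \ref{comp}, \ref{center} and Theorem~\ref{residual}, give: \ldots'', and you have simply unpacked that sentence by spelling out the cofinality argument and handling the edge case $r=1$ (which the paper glosses over). Aside from the slip ``centrality'' where you mean ``centerlessness'', there is nothing to add.
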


\section{The CSP for free solvable groups on two generators}

As mentioned, Lemma \ref{conc} is valid for every $r,m>1$, but from now on we will concentrate on the case $m=2$. So, in this section, $\Phi_r$ denotes the free solvable group on \textbf{two} generators of solvability length $r$, and the generators of $\Phi_r$ will be denoted by $x_r$ and $y_r$. We will denote the generators of $\mathbb{Z}^2$ by $x$ and $y$.  \newline

Consider the natural map $\Phi_r \to \mathbb{Z} ^{2} $, defined by $x_r \mapsto x$, $y_r \mapsto y$. This map induces the natural maps: $Aut(\Phi_r) \to Aut(\mathbb{Z} ^{2}) $ and $Out(\Phi_r) \to Out(\mathbb{Z} ^{2}) $. It is easy to check that the map $Aut(\Phi_r) \to Aut(\mathbb{Z} ^{2}) $ is surjective, as $Aut(\mathbb{Z} ^{2}) $ is generated by the automorphisms:

\[ \alpha :\left\{\begin{array}{l} {x\mapsto xy} \\ {y\mapsto y} \end{array}\right. ,  \beta :\left\{\begin{array}{l} {x\mapsto x^{-1} } \\ {y\mapsto y} \end{array}\right.  , \gamma :\left\{\begin{array}{l} {x\mapsto y} \\ {y\mapsto x} \end{array}\right. \]

which are the images of the automorphisms:

\[ \alpha_r :\left\{\begin{array}{l} {x_r\mapsto x_r y_r} \\ {y_r\mapsto y_r} \end{array}\right. ,  \beta_r :\left\{\begin{array}{l} {x_r\mapsto x_r^{-1} } \\ {y_r\mapsto y_r} \end{array}\right.  , \gamma_r :\left\{\begin{array}{l} {x_r\mapsto y_r} \\ {y_r\mapsto x_r} \end{array}\right. \]

Therefore, the map $Out(\Phi_r) \to Out(\mathbb{Z} ^{2}) =Aut(\mathbb{Z} ^{2}) $ is also surjective. We want to show that this map is also injective, and to conclude that: $Out(\Phi_r) \cong Out(\mathbb{Z} ^{2}) =Aut(\mathbb{Z} ^{2}) \cong GL_{2} (\mathbb{Z} )$.\newline

\begin{definition}
Let $G$ be a group and $IA(G):=\ker(Aut(G) \to Aut(G/G'))$. An element of $IA(G)$ is called an "IA-automorphism".
\end{definition}

The sequence $1\to IA(\Phi_r )\to Aut(\Phi_r) \to Aut(\mathbb{Z} ^{2}) \to 1$ is an exact sequence, and so is the sequence $1\to Inn(\Phi_r) \to Aut(\Phi_r) \to Out(\Phi_r) \to 1$. Clearly, $Inn(G)\subseteq IA(G)$, and in particular, $Inn(\Phi_r) \subseteq IA(\Phi_r )$. Bachmuth, Formanek and Mochizuki, gave few conditions for equality:\newline

\begin{theorem} [\text{[BFM]}]
Let $F$ be the free group on two generators, and let $R$ be a normal subgroup of $F$ satisfying:
\begin{enumerate}
\item[-] $R \leqslant F'$
\item[-] $\mathbb{Z}(F/R)$, the integral group ring of $F/R$, is a domain.
\end{enumerate}
Then, $IA(F/R') = Inn(F/R')$.
\end{theorem}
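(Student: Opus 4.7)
The plan is to apply the Romanovskii embedding from Theorem \ref{magnus} with $n=0$ to convert the statement into a matrix-entry calculation over the group ring $\mathbb{Z}[H]$, where $H := F/R$. Set $G := F/R'$; since $R \leqslant F'$, the abelianization $G^{ab}$ coincides with $F/F' \cong \mathbb{Z}^2$. Embed $G \hookrightarrow R_0(H)$ via $\tilde g_i \mapsto \left(\begin{smallmatrix} g_i & t_i \\ 0 & 1 \end{smallmatrix}\right)$; then, for any IA-automorphism $\phi$ of $G$, we may write $\phi(\tilde g_i)$ in matrix form as $\left(\begin{smallmatrix} g_i & t_i + s_i \\ 0 & 1 \end{smallmatrix}\right)$ with $s_i = a_{i1}t_1 + a_{i2}t_2$, where Theorem \ref{magnus}(2) forces the constraint
\[ a_{i1}(1-g_1) + a_{i2}(1-g_2) = 0 \qquad (i=1,2). \]

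Next, I would aim to realize $\phi$ as conjugation inside $R_0(H)$ by a matrix $M := \left(\begin{smallmatrix} h & u \\ 0 & 1 \end{smallmatrix}\right)$ lying in the image of $G$. A direct matrix computation shows that such a conjugation sends $\left(\begin{smallmatrix} g_i & t_i \\ 0 & 1 \end{smallmatrix}\right)$ to $\left(\begin{smallmatrix} hg_ih^{-1} & ht_i + (1-hg_ih^{-1})u \\ 0 & 1 \end{smallmatrix}\right)$. Matching this with $\phi(\tilde g_i)$ imposes three requirements: (i) $hg_ih^{-1} = g_i$ for $i=1,2$, so that $h$ centralizes both generators of $H$; (ii) the coefficient equation $s_i = (h-1)t_i + (1-g_i)u$; and (iii) the image condition $1-h = u_1(1-g_1) + u_2(1-g_2)$ from Theorem \ref{magnus}(2), which places $M$ inside the image of $G$.

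The main work is to solve (i)--(iii) using the constraints on $s_i$. Comparing coefficients of $t_j$ in (ii) reads off $a_{ij} = (1-g_i)u_j$ for $j \ne i$, and $a_{ii} = (h-1) + (1-g_i)u_i$. Substituting these into $a_{i1}(1-g_1) + a_{i2}(1-g_2) = 0$ and \emph{left-cancelling} the nonzero element $1-g_i$ in the domain $\mathbb{Z}[H]$ yields $a_{ii} = -u_{3-i}(1-g_{3-i})$. Consequently the formulas obtained from $i=1$ and $i=2$ combine into a single consistent candidate $h - 1 = -u_1(1-g_1) - u_2(1-g_2)$, which automatically validates the image condition (iii); one then checks that the corresponding $h$ really lies in $H$ and, using the fact that $\phi$ is an honest automorphism of $G$, centralizes the two generators, so (i) also holds.

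The main obstacle is the left-cancellation step in a potentially noncommutative domain -- precisely the place where the hypothesis that $\mathbb{Z}[H]$ has no zero divisors is indispensable. In the case of principal interest for this paper, $R = F'$, the group $H = F/F' = \mathbb{Z}^2$ is abelian and $\mathbb{Z}[H]$ is the commutative Laurent polynomial ring $\mathbb{Z}[x^{\pm 1}, y^{\pm 1}]$; there both the cancellation step and the centralizer condition in (i) trivialize, and the argument runs most cleanly. Finally, the restriction to two generators is essential: with $m$ generators the matching of coefficients becomes an $m\times m$ system in which the off-diagonal identities $a_{ij} = (1-g_i)u_j = a_{\ell j}/(1-g_\ell) \cdot (1-g_i)$ impose independence of $u_j$ from the choice of $i\ne j$, a compatibility condition that fails in general for $m\ge 3$ (producing the Bachmuth--Mochizuki counterexamples); for $m=2$ no such compatibility is needed, which is exactly what lets the argument succeed here.
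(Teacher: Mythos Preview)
The paper does not prove this theorem; it is quoted from [BFM] and used as a black box. So there is no ``paper's own proof'' to compare against directly. That said, the paper does prove a closely related special case in \S5 (Theorem~\ref{last}, essentially the case $R=F'$ over $\mathbb{Z}_n$), and its method---computing the Bachmuth ``Jacobian'' determinant $d(\sigma)$, using Lemma~\ref{det} to force it to be a unit, and then reading off the conjugating element from the shape of that unit---is quite different from what you sketch.

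Your proposal has two genuine gaps.

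\emph{First}, your opening move writes $\phi(\tilde g_i)=\left(\begin{smallmatrix} g_i & t_i+s_i\\ 0 & 1\end{smallmatrix}\right)$, i.e.\ assumes the upper-left entry is exactly $g_i$. But ``$\phi$ is IA'' only says $\phi(\tilde g_i)\equiv \tilde g_i$ modulo $G'=F'/R'$, and under the projection $G\to H=F/R$ this forces the upper-left entry to lie in $g_iH'$, not to equal $g_i$. Your assumption is correct precisely when $H'=1$, i.e.\ when $R=F'$; for general $R\le F'$ it is unjustified, and the whole computation downstream collapses.

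\emph{Second}, the step ``comparing coefficients \dots\ reads off $a_{ij}=(1-g_i)u_j$'' is circular: $u_1,u_2$ are the unknowns you are trying to construct, so you cannot substitute $a_{12}=(1-g_1)u_2$ into the relation and then cancel. What you actually need is to \emph{prove} that $(1-g_1)\mid a_{12}$ and $(1-g_2)\mid a_{21}$ in $\mathbb{Z}[H]$ before any such $u_j$ can be defined. In the commutative case $H=\mathbb{Z}^2$ this divisibility does follow (the Laurent ring is a UFD and $1-x,\,1-y$ are coprime irreducibles), but you do not say this, and for a general noncommutative domain $\mathbb{Z}[H]$ the step simply fails: you cannot even factor $(1-g_1)$ to the same side in $a_{11}(1-g_1)+(1-g_1)u_2(1-g_2)$. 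Relatedly, you never use that $\phi$ is an automorphism rather than merely an IA-endomorphism; the actual argument (both in [BFM] and in the paper's \S5) hinges on the invertibility of the Jacobian matrix $\left(\begin{smallmatrix} a_1 & a_2\\ b_1 & b_2\end{smallmatrix}\right)$, which is exactly where the automorphism hypothesis enters.

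In short: your outline is close to a valid proof of the special case $R=F'$ (and, with the UFD remark added, could be completed there), but it does not establish the BFM theorem as stated.
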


Let us now mention:

\begin{theorem} [\text{Kropholler-Linnell-Moody [KLM]}]
Let $k$ be a division ring and let $G$ be a solvable by finite group. If $G$ is torison-free, then $k[G]$ is a domain.
\end{theorem}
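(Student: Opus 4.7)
The plan is to follow the Kropholler--Linnell--Moody strategy: reduce to a finitely generated case, induct on derived length via crossed product decompositions, and invoke the polycyclic-by-finite base case of Farkas--Snider (characteristic zero) and Cliff (positive characteristic).

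First I would reduce to the finitely generated setting. Suppose $ab = 0$ for nonzero $a,b \in k[G]$. Each of $a, b$ has finite support, so both lie in $k[H]$ for some finitely generated subgroup $H \leq G$. Since $H$ inherits the hypotheses (solvable-by-finite and torsion-free), it suffices to prove the theorem under the additional assumption that $G$ is finitely generated. A subsidiary reduction: if $N \trianglelefteq G$ is torsion-free solvable of finite index, then a vanishing product in $k[G]$ would yield one in $k[N]$ after multiplying by a transversal, so with some care one may even reduce to the pure solvable case, keeping track of the finite quotient via a crossed product $k[G] = k[N] * (G/N)$.

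Next, I would induct on the derived length of $G$. In the base case $G$ is abelian, and since $G$ is finitely generated torsion-free abelian, $k[G]$ is a skew (actually commutative) Laurent polynomial ring over $k$, which is a domain. For the inductive step, pick $N = G'$ (or some normal subgroup of strictly smaller derived length such that $G/N$ is torsion-free abelian) and write
\[
 k[G] \;\cong\; k[N] * (G/N).
\]
By the inductive hypothesis $k[N]$ is a domain. The essential point, which is the technical heart of the argument, is that $k[N]$ also satisfies the Ore condition, so it embeds in a division ring of fractions $D(N)$, and the crossed product extends to $D(N) * (G/N)$. Because $G/N$ is torsion-free abelian, the latter crossed product can be built as an iterated skew Laurent extension of the division ring $D(N)$ and is therefore itself a domain. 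The original $k[G]$ embeds as a subring, and is consequently a domain too.

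The main obstacle, and the reason that full KLM machinery is invoked rather than a one-page argument, is the simultaneous maintenance of the Ore condition along the induction and the proper handling of the ``by-finite'' part, where naive finite extensions can ruin the domain property. Torsion-freeness is what saves the day: a nontrivial finite normal subgroup is ruled out, and one can push a domain-plus-Ore statement through a prime-index extension using a trace argument together with the Farkas--Snider--Cliff theorem, which supplies the full polycyclic-by-finite result as the ``known input'' that anchors the whole induction. Granting that polycyclic-by-finite torsion-free base case and the Ore-preservation lemmas, the induction above closes and delivers the cited theorem.
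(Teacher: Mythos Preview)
The paper does not prove this theorem at all: it is quoted verbatim as an external result from Kropholler--Linnell--Moody [KLM] and used as a black box to deduce that $\mathbb{Z}[\Phi_{r-1}]$ is a domain. There is therefore no ``paper's own proof'' to compare your attempt against.

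As for your sketch on its own merits: the overall architecture you describe (reduction to finitely generated, crossed-product decomposition $k[G]\cong k[N]*(G/N)$, induction on derived length, Ore localisation to a division ring, and the Farkas--Snider/Cliff input for the polycyclic-by-finite base) is indeed the shape of the KLM argument. You correctly flag the two genuinely hard points: propagating the Ore condition through the induction, and handling the finite-index part without losing the domain property. However, your write-up is a strategy outline rather than a proof; in particular the sentence ``Granting that polycyclic-by-finite torsion-free base case and the Ore-preservation lemmas, the induction above closes'' is precisely where all the work lies, and the actual KLM paper needs nontrivial $K$-theoretic input (Moody's induction theorem on $G_0$) to establish the required Goldie/Ore properties. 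So the proposal is a faithful roadmap, not a self-contained argument.
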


\begin{corollary}
For every $r \in \mathbb{N}$:
\begin{enumerate}
\item $ IA(\Phi_r)= Inn(\Phi_r)$.
\item The map $Out(\Phi_r) \to GL_{2} (\mathbb{Z}) $ is an isomorphism.
\end{enumerate}
\end{corollary}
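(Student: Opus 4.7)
The plan is to apply the Bachmuth--Formanek--Mochizuki criterion to the presentation $\Phi_r = F/F^{(r)}$, with $R := F^{(r-1)}$. Then $R' = [F^{(r-1)}, F^{(r-1)}] = F^{(r)}$, so $F/R' = \Phi_r$, and for $r \ge 2$ we have $R = F^{(r-1)} \le F^{(1)} = F'$ as required. The case $r=1$ is trivial since $\Phi_1 \cong \mathbb{Z}^2$ is abelian, so $IA(\Phi_1) = \{e\} = Inn(\Phi_1)$; we therefore focus on $r \ge 2$.

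The only nontrivial hypothesis of the BFM theorem to verify is that $\mathbb{Z}[F/R] = \mathbb{Z}[\Phi_{r-1}]$ is a domain. By definition $\Phi_{r-1}$ is solvable, so by the Kropholler--Linnell--Moody theorem quoted just above, it suffices to show that $\Phi_{r-1}$ is torsion-free. This is a classical fact about free solvable groups: using the iterated Magnus/Romanovskii embedding developed in $\S 1$--$\S 2$, $\Phi_{r-1}$ sits inside an iterated extension whose successive quotients are torsion-free abelian (namely the free modules appearing in the matrix construction), and an extension of a torsion-free group by a torsion-free abelian group is torsion-free. Thus $\Phi_{r-1}$ is torsion-free, $\mathbb{Z}[\Phi_{r-1}]$ is a domain, and BFM yields $IA(\Phi_r) = Inn(\Phi_r)$, proving (1).

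For (2) I would combine the two exact sequences recorded just before the BFM theorem, namely
\[
1 \to IA(\Phi_r) \to Aut(\Phi_r) \to Aut(\mathbb{Z}^2) \to 1,
\qquad
1 \to Inn(\Phi_r) \to Aut(\Phi_r) \to Out(\Phi_r) \to 1.
\]
Since $Inn(\Phi_r) \subseteq IA(\Phi_r)$, the first sequence descends to a surjection $Out(\Phi_r) \twoheadrightarrow Aut(\mathbb{Z}^2)$ whose kernel is $IA(\Phi_r)/Inn(\Phi_r)$; the surjectivity was already established in the preceding discussion using the explicit lifts $\alpha_r, \beta_r, \gamma_r$ of the generators $\alpha, \beta, \gamma$ of $Aut(\mathbb{Z}^2)$. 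By part (1) this kernel is trivial, so the induced map $Out(\Phi_r) \to Aut(\mathbb{Z}^2) \cong GL_2(\mathbb{Z})$ is an isomorphism.

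The main obstacle in principle is the torsion-freeness of $\Phi_{r-1}$, which is the input that lets Kropholler--Linnell--Moody apply. In practice the rest of the argument is formal bookkeeping with the two exact sequences; once BFM is invoked correctly with the right choice of $R$, statement (2) drops out essentially for free.
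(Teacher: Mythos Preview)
Your proposal is correct and matches the paper's intended argument: the corollary is stated immediately after the BFM and Kropholler--Linnell--Moody theorems with no separate proof, so the implicit derivation is exactly the one you spell out---take $R=F^{(r-1)}$ in BFM, use KLM to get that $\mathbb{Z}[\Phi_{r-1}]$ is a domain (torsion-freeness of $\Phi_{r-1}$ being the only point requiring comment), and then read off (2) from (1) via the two exact sequences. Your handling of the case $r=1$ and your inductive torsion-freeness argument via the Magnus/Romanovskii embedding are both fine; alternatively, torsion-freeness also follows from Theorem~\ref{residual} (residually finite-$p$ for all $p$).
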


\begin{conjecture}
\label{GL}
With the above isomorphism: for $r,n \in \mathbb{N}$: \[\ker(Out(\Phi_r) \to Out(\Phi_{r,n}))=\ker(GL_{2} (\mathbb{Z} ) \to GL_{2} (\mathbb{Z}_{n^r}) ) \]
\end{conjecture}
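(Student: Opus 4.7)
The plan is to establish both inclusions of the asserted equality. Write $L := \ker(Out(\Phi_r) \to Out(\Phi_{r,n}))$ and $M := \ker(GL_2(\mathbb{Z}) \to GL_2(\mathbb{Z}_{n^r}))$, viewed as subgroups of $GL_2(\mathbb{Z}) \cong Out(\Phi_r)$.

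The inclusion $L \subseteq M$ is immediate. If $\sigma \in L$, any lift $\tilde\sigma \in Aut(\Phi_r)$ induces an inner automorphism of $\Phi_{r,n}$, which acts trivially on the abelianization. By Proposition \ref{ab}, $\Phi_{r,n}^{ab} = (\mathbb{Z}_{n^r})^2$, and the induced action of $\tilde\sigma$ there is precisely the reduction modulo $n^r$ of its image $\bar\sigma \in GL_2(\mathbb{Z})$; hence $\bar\sigma \equiv I \pmod{n^r}$.

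For the reverse inclusion $M \subseteq L$, the key calculation relies on Proposition \ref{order}. Consider $\alpha_r^{n^r} \in Aut(\Phi_r)$, which sends $x_r \mapsto x_r y_r^{n^r}$ and $y_r \mapsto y_r$. In $\Phi_{r,n}$ one has $y_{r,n}^{n^r} = 1$ by Proposition \ref{order}, so $\alpha_r^{n^r}$ induces the identity on $\Phi_{r,n}$. Consequently the image of $\alpha_r^{n^r}$ in $GL_2(\mathbb{Z})$, namely the elementary matrix with $n^r$ in the off-diagonal, lies in $L$. Since $L$ is normal in $GL_2(\mathbb{Z})$, it contains the normal closure of this matrix. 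I would then invoke the classical theorem that for every $N \geq 1$ the principal congruence subgroup $\Gamma(N) \leq SL_2(\mathbb{Z})$ is the normal closure in $SL_2(\mathbb{Z})$ of $\begin{pmatrix} 1 & N \\ 0 & 1 \end{pmatrix}$. This yields $L \supseteq M \cap SL_2(\mathbb{Z})$; and for $n^r \geq 3$ any element of $M$ has $\det \equiv 1 \pmod{n^r}$ with $\det \in \{\pm 1\}$, forcing $\det = 1$, so $M \subseteq SL_2(\mathbb{Z})$ and therefore $M \subseteq L$.

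The remaining cases $n^r \in \{1,2\}$ are routine: $n^r = 1$ gives $L = M = GL_2(\mathbb{Z})$, while $n^r = 2$ forces $(n,r) = (2,1)$, in which case $\Phi_{1,2} = (\mathbb{Z}/2\mathbb{Z})^2$ is abelian, so $Out = Aut$ and the claim is just the definition of $\Gamma(2) \leq GL_2(\mathbb{Z})$. The principal obstacle is really only the quoted normal-closure fact for $\Gamma(N) \leq SL_2(\mathbb{Z})$; although standard, it should be cited carefully, or verified directly via the presentation of $SL_2(\mathbb{Z}/N)$ in terms of the images of $S$ and $T$ with the added relation $T^N = 1$. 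Everything else is either formal or already established in the paper.
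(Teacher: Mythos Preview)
Your proposal has a genuine gap in the reverse inclusion $M\subseteq L$. The ``classical theorem'' you invoke --- that the principal congruence subgroup $\Gamma(N)\leq SL_2(\mathbb{Z})$ equals the normal closure of $\begin{pmatrix}1&N\\0&1\end{pmatrix}$ --- is \emph{false} for $N\geq 6$. Writing $PSL_2(\mathbb{Z})=\langle S,T\mid S^2=(ST)^3=1\rangle$, the quotient by the normal closure of $T^N$ is precisely the $(2,3,N)$ triangle group $\langle S,T\mid S^2,(ST)^3,T^N\rangle$. For $N\leq 5$ this group is finite and does coincide with $PSL_2(\mathbb{Z}/N)$, but for $N\geq 6$ one has $\tfrac12+\tfrac13+\tfrac1N\leq 1$, so the triangle group is infinite, whereas $PSL_2(\mathbb{Z}/N)$ is finite. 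Hence the normal closure of $T^N$ has infinite index in $\bar\Gamma(N)$, and your suggested verification via the presentation of $SL_2(\mathbb{Z}/N)$ would fail for exactly this reason. Since the relevant values of $n^r$ are unbounded, this breaks the argument.

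Note also that the statement you are trying to prove is a \emph{conjecture} in the paper, established there only for $r=2$. The paper does not attack the equality of kernels directly at all: it first reformulates the conjecture as the assertion $IA'(\Phi_{r,n})=Inn(\Phi_{r,n})$, and then for $r=2$ proves this by working inside the Magnus embedding. Concretely, an IA-automorphism $\sigma$ of $\Phi_{2,n}$ is written explicitly in terms of two parameters $p,q\in\mathbb{Z}_n[\mathbb{Z}_n^2]$; the determinant $d(\sigma)$ is computed to be $1+(1-y_n)p-(1-x_n)q$; and the hypothesis $\sigma\in Aut'(\Phi_n)$ forces $d(\sigma)=x_n^iy_n^j$, from which one reads off an explicit element of $\Phi_n$ whose inner automorphism equals $\sigma$. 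Your easy inclusion $L\subseteq M$ is correct and agrees with the paper, but the substantive direction requires this ring-theoretic computation rather than a normal-closure argument in $GL_2(\mathbb{Z})$.
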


As $\mathbb{Z}^2$ does not have the CSP, there is a subgroup of $Aut(\mathbb{Z}^2 ) \cong GL_2 (\mathbb{Z})$ of finite index containing no subgroup of the form $\ker (GL_2 (\mathbb{Z}) \to GL_2 (\mathbb{Z}_n))$. Therefore, if the conjecture is true, the same subgroup of $GL_2 (\mathbb{Z})$ does not contain any subgroup of the form $ \ker(Out(\Phi_r) \to Out(\Phi_{r,n}))$, and thus, by Lemma \ref{conc}, $\Phi_r$ does not have the CSP.\newline

We consider now the following notations:
\begin{definition}
Define:
\begin{enumerate}
\item [-] $GL'_{2}(\mathbb{Z}_{n})=Im(GL_{2}(\mathbb{Z})\rightarrow GL_{2}(\mathbb{Z}_{n}))$.
\item [-] $Aut'(\Phi_{r,n})=Im(Aut(\Phi_r)\rightarrow Aut(\Phi_{r,n}))$.
\item [-] $Out'(\Phi_{r,n})=Im(Out(\Phi_r)\rightarrow Out(\Phi_{r,n}))$.
\item [-] $IA'(\Phi_{r,n})=IA(\Phi_{r,n})\cap Aut'(\Phi_{r,n})$.
\end{enumerate}
\end{definition}

With the above notations we have:

\begin{lemma}
Conjecture \ref{GL} is true iff $IA'(\Phi_{r,n})=Inn(\Phi_{r,n})$.
\end{lemma}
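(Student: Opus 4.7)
The plan is to embed both kernels appearing in Conjecture~\ref{GL} into the same commutative square coming from passage to abelianizations, and then to show that the quotient of one by the other is canonically isomorphic to $IA'(\Phi_{r,n})/Inn(\Phi_{r,n})$.

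First I set up the commutative square
\[
\begin{array}{ccc}
Out(\Phi_r) & \longrightarrow & Out(\Phi_{r,n}) \\
\downarrow & & \downarrow \\
GL_2(\mathbb{Z}) & \longrightarrow & GL_2(\mathbb{Z}_{n^r})
\end{array}
\]
whose horizontal maps are induced by the quotient $\Phi_r \twoheadrightarrow \Phi_{r,n}$, whose left vertical is the isomorphism from the preceding corollary, and whose right vertical is induced by passing to $\Phi_{r,n}^{ab}=(\mathbb{Z}/n^r\mathbb{Z})^2$ (Proposition~\ref{ab}). The right vertical is well defined because inner automorphisms act trivially on the abelianization, and its kernel is exactly $IA(\Phi_{r,n})/Inn(\Phi_{r,n})$. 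Writing $K_1$ and $K_2$ for the two kernels compared in Conjecture~\ref{GL}, commutativity of the square immediately gives $K_1 \subseteq K_2$ with no extra hypothesis, so the conjecture is equivalent to the vanishing of $K_2/K_1$.

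Next I observe that any inner automorphism of $\Phi_{r,n}$ lifts to an inner automorphism of $\Phi_r$ (conjugate by any preimage of the relevant element), so $Inn(\Phi_{r,n}) \subseteq Aut'(\Phi_{r,n})$; this yields $Out'(\Phi_{r,n}) = Aut'(\Phi_{r,n})/Inn(\Phi_{r,n})$ and hence
\[
Out'(\Phi_{r,n}) \cap \bigl(IA(\Phi_{r,n})/Inn(\Phi_{r,n})\bigr) = IA'(\Phi_{r,n})/Inn(\Phi_{r,n}).
\]
The composite $K_2 \hookrightarrow Out(\Phi_r) \to Out(\Phi_{r,n})$ lands in this intersection by commutativity, has kernel $K_1$ by definition, and is surjective onto $IA'(\Phi_{r,n})/Inn(\Phi_{r,n})$: any $\bar\phi \in IA'(\Phi_{r,n})$ lifts to some $\phi \in Aut(\Phi_r)$ whose image in $GL_2(\mathbb{Z}_{n^r})$ is the trivial action of $\bar\phi$ on $\Phi_{r,n}^{ab}$, so $[\phi] \in K_2$ by construction. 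This produces a canonical isomorphism $K_2/K_1 \cong IA'(\Phi_{r,n})/Inn(\Phi_{r,n})$, and the claimed equivalence follows at once.

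The argument is essentially a single diagram chase, and I do not foresee a serious obstacle; the only point requiring care is the clean identification of $Out'(\Phi_{r,n}) \cap (IA(\Phi_{r,n})/Inn(\Phi_{r,n}))$ with $IA'(\Phi_{r,n})/Inn(\Phi_{r,n})$, which rests on the easy but essential fact that $Inn(\Phi_{r,n}) \subseteq Aut'(\Phi_{r,n})$, without which one would see only $Aut'(\Phi_{r,n}) \cdot Inn(\Phi_{r,n})/Inn(\Phi_{r,n})$ in place of $Aut'(\Phi_{r,n})/Inn(\Phi_{r,n})$.
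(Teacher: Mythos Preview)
Your argument is correct and follows essentially the same route as the paper: both identify the map $Out'(\Phi_{r,n})\to GL'_2(\mathbb{Z}_{n^r})$ induced by abelianization and observe that its being an isomorphism is simultaneously equivalent to Conjecture~\ref{GL} (equality of kernels upstairs) and to $IA'(\Phi_{r,n})=Inn(\Phi_{r,n})$ (triviality of its kernel downstairs). You simply spell out the diagram chase and the identification $K_2/K_1\cong IA'(\Phi_{r,n})/Inn(\Phi_{r,n})$ more explicitly than the paper does, including the point that $Inn(\Phi_{r,n})\subseteq Aut'(\Phi_{r,n})$, which the paper uses tacitly.
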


\begin{proof}
As was shown in proposition \ref{ab} $\Phi_{r,n}^{ab} = \Phi_{r,n}/[\Phi_{r,n},\Phi_{r,n}] \backsimeq \mathbb{Z}_{n^r}$. Thus, the natural surjective map $Aut(\Phi_r)\rightarrow GL_{2}(\mathbb{Z})$ induces a natural surjective map $Aut'(\Phi_{r,n})\rightarrow GL'_{2}(\mathbb{Z}_{n^r})$ whose kernel is exactly $IA'(\Phi_{r,n})$. This map induces another natural surjective map $Out'(\Phi_{r,n})\rightarrow GL'_{2}(\mathbb{Z}_{n^r})$, and this map is an isomorphism iff $IA'(\Phi_{r,n})=Inn(\Phi_{r,n})$. On the other hand, the latter is an isomorphism iff conjecture \ref{GL} is true. So the lemma follows.

\end{proof}

\begin{theorem}
\label{last}
For $r=2$ we have $IA'(\Phi_{2,n})=Inn(\Phi_{2,n})$  for every $n \in \mathbb{N}$.
\end{theorem}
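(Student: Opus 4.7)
The strategy is to combine the Bachmuth--Formanek--Mochizuki corollary $IA(\Phi_2)=Inn(\Phi_2)$ with the order computation $O(x_{2,n})=O(y_{2,n})=n^2$ from Proposition \ref{order}. The inclusion $Inn(\Phi_{2,n})\subseteq IA'(\Phi_{2,n})$ is immediate, since every conjugation by $g\in\Phi_{2,n}$ is the reduction of the corresponding conjugation in $\Phi_2$; so the content is the reverse inclusion.

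For the reverse inclusion, I take $\varphi\in IA'(\Phi_{2,n})$ and fix a lift $\Phi\in Aut(\Phi_2)$. By Proposition \ref{ab} the abelianization $\Phi_{2,n}^{ab}$ is $(\mathbb{Z}/n^2\mathbb{Z})^2$, and $\varphi$ acts trivially on it, so the image $M$ of $\Phi$ in $GL_2(\mathbb{Z})$ belongs to $\Gamma(n^2):=\ker(GL_2(\mathbb{Z})\to GL_2(\mathbb{Z}/n^2\mathbb{Z}))$; for $n\ge 2$ a determinant argument forces $M\in SL_2(\mathbb{Z})$. The key calculation is that the Nielsen lift $\alpha_2\in Aut(\Phi_2)$ from the paper, whose image in $GL_2(\mathbb{Z})$ is $\begin{pmatrix}1 & 0 \\ 1 & 1\end{pmatrix}$, satisfies $\alpha_2^{n^2}(x_2)=x_2 y_2^{n^2}$ by an immediate induction; reducing modulo $K_{2,n}$ and invoking Proposition \ref{order} to get $y_{2,n}^{n^2}=e$, we find $\overline{\alpha_2^{n^2}}=\mathrm{id}_{\Phi_{2,n}}$. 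A symmetric computation shows $\overline{(\gamma_2\alpha_2\gamma_2)^{n^2}}=\mathrm{id}_{\Phi_{2,n}}$ for the lift of $\begin{pmatrix}1 & 1 \\ 0 & 1\end{pmatrix}^{n^2}$.

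By the classical fact that $\Gamma(n^2)$ is normally generated in $SL_2(\mathbb{Z})$ by the two elementary matrices $\begin{pmatrix}1 & 0 \\ n^2 & 1\end{pmatrix}$ and $\begin{pmatrix}1 & n^2 \\ 0 & 1\end{pmatrix}$, I write $M=\prod_i g_i E_i^{\varepsilon_i} g_i^{-1}$ with $E_i$ among these matrices and $g_i\in SL_2(\mathbb{Z})$. Choosing arbitrary lifts $G_i\in Aut(\Phi_2)$ of the $g_i$ via the surjection $Aut(\Phi_2)\twoheadrightarrow GL_2(\mathbb{Z})$, I form $\Phi_M:=\prod_i G_i (E_i^{(2)})^{\varepsilon_i} G_i^{-1}$, where $E_i^{(2)}$ is the corresponding $\alpha_2^{n^2}$ or $(\gamma_2\alpha_2\gamma_2)^{n^2}$. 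Then $\Phi_M\in Aut(\Phi_2)$ lifts $M$ and $\overline{\Phi_M}=\mathrm{id}_{\Phi_{2,n}}$. The composition $\Phi\Phi_M^{-1}$ induces the identity on $\mathbb{Z}^2$, so it is an IA-automorphism of $\Phi_2$, hence inner by BFM; writing $\Phi\Phi_M^{-1}$ as conjugation by some $h\in\Phi_2$ and reducing modulo $K_{2,n}$ yields $\varphi$ as conjugation by $\bar h\in\Phi_{2,n}$, which lies in $Inn(\Phi_{2,n})$.

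The main obstacle I anticipate is the classical normal-generation statement for $\Gamma(n^2)$ in $SL_2(\mathbb{Z})$; although standard, it is not entirely transparent and may require a careful citation, or a direct argument exploiting the amalgamated product structure $SL_2(\mathbb{Z})\cong\mathbb{Z}/4*_{\mathbb{Z}/2}\mathbb{Z}/6$. Everything else reduces to Proposition \ref{order} and the BFM corollary.
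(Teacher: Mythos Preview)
Your argument has a genuine gap: the ``classical fact'' you rely on --- that $\Gamma(n^{2})$ is the normal closure in $SL_{2}(\mathbb{Z})$ of the elementary matrices $E_{12}(n^{2})$ and $E_{21}(n^{2})$ --- is \emph{false} for $n\ge 3$. Since $E_{21}(m)$ is already an $SL_{2}(\mathbb{Z})$--conjugate of $E_{12}(m)^{-1}$, that normal closure $N_{m}$ equals the normal closure of $T^{m}$ alone, where $T=E_{12}(1)$. Passing to $PSL_{2}(\mathbb{Z})=\langle s,u\mid s^{2},u^{3}\rangle$ (with $s,u$ the images of $S=\left(\begin{smallmatrix}0&-1\\1&0\end{smallmatrix}\right)$ and $ST$), the image of $T$ is $su$, so
\[
PSL_{2}(\mathbb{Z})/\overline{N_{m}}\;\cong\;\langle s,u\mid s^{2},\,u^{3},\,(su)^{m}\rangle,
\]
the $(2,3,m)$ triangle group. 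For $m\ge 6$ this group is infinite; hence $N_{m}$ has infinite index in $SL_{2}(\mathbb{Z})$ and is strictly smaller than $\Gamma(m)$. In particular, for $n\ge 3$ your decomposition $M=\prod_{i}g_{i}E_{i}^{\varepsilon_{i}}g_{i}^{-1}$ simply does not exist for a general $M\in\Gamma(n^{2})$, and the construction of $\Phi_{M}$ collapses. It is also worth noting that nothing in your argument is specific to $r=2$: Proposition~\ref{order} and the BFM corollary $IA(\Phi_{r})=Inn(\Phi_{r})$ hold for every $r$, so if the normal--generation step were valid it would prove Conjecture~\ref{GL} in full --- which the paper leaves open.

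The paper's proof avoids $SL_{2}(\mathbb{Z})$ entirely and works inside the Romanovskii/Magnus model of $\Phi_{n}$. Given $\sigma\in IA'(\Phi_{n})$, Lemma~\ref{comsub} lets one write $\sigma$ via two parameters $p,q\in\mathbb{Z}_{n}[\mathbb{Z}_{n}^{2}]$, and a direct computation gives the Jacobian--type determinant $d(\sigma)=1+(1-y_{n})p-(1-x_{n})q$. The assumption $\sigma\in Aut'(\Phi_{n})$, together with Lemma~\ref{det} applied to a lift in $Aut(\Phi)$ and the fact that the units of $\mathbb{Z}[\mathbb{Z}^{2}]$ are exactly $\pm x^{i}y^{j}$, forces $d(\sigma)=x_{n}^{i}y_{n}^{j}$. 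The resulting identity $1-x_{n}^{i}y_{n}^{j}=(1-x_{n})q-(1-y_{n})p$ is precisely the criterion of Theorem~\ref{magnus}(2), so $\left(\begin{smallmatrix}x_{n}^{i}y_{n}^{j}&qt_{x_{n}}-pt_{y_{n}}\\0&1\end{smallmatrix}\right)\in\Phi_{n}$, and one checks that $\sigma$ is conjugation by this element. No structural statement about congruence subgroups of $SL_{2}(\mathbb{Z})$ is needed.
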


Thus, once we prove Theorem \ref{last}, Theorem \ref{main} is also proven.

\section{Proof of Theorem \ref{last}}

In this section $\Phi=\Phi_0=\Phi_{2,2}$ denotes the free metabelian group on two generators. We also write $\Phi_n$ instead of $\Phi_{2,2,n}$. Let $n \in \mathbb{N} \cup \{0\}$ and let $x_n,y_n$ be the generators of $(\mathbb{Z}_n)^2$. By Theorem \ref{magnus} we can consider $\Phi_n $ as the group generated by $x_n^{*}=\left(\begin{array}{cc} {x_n} & {t_{x_n} } \\ {0} & {1} \end{array}\right)$ and $y_n^{*}=\left(\begin{array}{cc} {y_n} & {t_{y_n} } \\ {0} & {1} \end{array}\right)$, and the elements of $\Phi_n$ are the matrices of the form $\left(\begin{array}{cc} {x_n^{i} y_n^{j} } & {a_{1} (x_n,y_n)t_{x_n} +a_{2} (x_n,y_n)t_{y_n} } \\ {0} & {1} \end{array}\right)$, such that:

\begin{enumerate}
\item $ i,j\in \mathbb{Z}$
\item $a_{1} (x_n,y_n),a_{2} (x_n,y_n)\in \mathbb{ Z}_n [\mathbb{ Z}_n ^{2} ]=\mathbb{ Z}_n [x_n^{\pm 1} ,y_n^{\pm 1} ]$
\item $1-x_n^{i} y_n^{j} = a_{1} (x_n,y_n)\cdot (1-x_n)+a_{2} (x_n,y_n)\cdot (1-y_n) $
\end{enumerate}

Every automorphism of $\Phi_n $ is determined by its action on the two generators: $\left(\begin{array}{cc} {x_n} & {t_{x_n} } \\ {0} & {1} \end{array}\right)$ and $\left(\begin{array}{cc} {y_n} & {t_{y_n} } \\ {0} & {1} \end{array}\right)$. Therefore, when we want to describe an automorphism, $\sigma $, of $\Phi _n$ we can write:

\[\sigma  =\left\{\begin{array}{l} {\left(\begin{array}{cc} {x_n} & {t_{x_n} } \\ {0} & {1} \end{array}\right)\mapsto \left(\begin{array}{cc} {x_n^{i} y_n^{j} } & {a_{1} t_{x_n} +a_{2} t_{y_n} } \\ {0} & {1} \end{array}\right)} \\ {\left(\begin{array}{cc} {y_n} & {t_{y_n} } \\ {0} & {1} \end{array}\right)\mapsto \left(\begin{array}{cc} {x_n^{k} y_n^{l} } & {b_{1} t_{x_n} +b_{2} t_{y_n} } \\ {0} & {1} \end{array}\right)} \end{array}\right. .\]

\begin{definition}
Let $n \in \mathbb{N} \cup \{0\}$. For $\sigma  \in Aut(\Phi_n)$ define: $d(\sigma ):=\det \left(\begin{array}{cc} {a_{1} } & {a_{2} } \\ {b_{1} } & {b_{2} } \end{array}\right)$. Notice that  $d(\sigma  )$ is well defined, as $a_{1} ,a_{2} ,b_{1} ,b_{2} \in \mathbb{Z}_n [\mathbb{Z}_n ^{2} ]$ and $\mathbb{Z} _n[\mathbb{Z}_n ^{2} ]$ is a commutative ring.
\end{definition}

\begin{lemma}
\label{det}
For $\sigma  \in Aut(\Phi_n)$, $d(\sigma )$ is invertible in $\mathbb{Z}_n [\mathbb{Z}_n ^{2} ]$.
\end{lemma}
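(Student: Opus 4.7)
My plan is to establish a multiplicative chain-rule identity for $d$ under composition in $Aut(\Phi_n)$ and then extract invertibility of $d(\sigma)$ from $\sigma\sigma^{-1}=\mathrm{id}$ together with $d(\mathrm{id})=1$. Every computation takes place inside the Romanovskii matrix group $R(\Phi_{1,n})$, whose matrix entries live in the commutative group ring $\mathbb{Z}_n[\mathbb{Z}_n^2]$.

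Given $\sigma\in Aut(\Phi_n)$ with coefficients $a_i,b_i$ as above, let $\sigma^{ab}:\mathbb{Z}_n^2\to\mathbb{Z}_n^2$ denote the induced automorphism ($x_n\mapsto x_n^iy_n^j$, $y_n\mapsto x_n^ky_n^l$) and let $\bar\sigma$ be its extension to a ring automorphism of $\mathbb{Z}_n[\mathbb{Z}_n^2]$. The key technical step is to prove, by induction on the length of a word representing $w\in\Phi_n$ in $x_n^{*\pm 1},y_n^{*\pm 1}$, the following substitution formula: if $w$ is represented by
\[
\begin{pmatrix} U & c_1t_{x_n}+c_2t_{y_n} \\ 0 & 1 \end{pmatrix},
\]
then $\sigma(w)$ has upper-left entry $\sigma^{ab}(U)$ and its $t_{x_n}$-coefficient equals $\bar\sigma(c_1)a_1+\bar\sigma(c_2)b_1$ (the $t_{y_n}$-coefficient being obtained by replacing $a_1,b_1$ with $a_2,b_2$). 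The base case is just the action of $\sigma$ on generators, while the inductive step for products and inverses is a direct matrix computation in $R(\Phi_{1,n})$ together with the ring-homomorphism property of $\bar\sigma$.

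Feeding $w=\tau(x_n^*)$ and $w=\tau(y_n^*)$ for another $\tau\in Aut(\Phi_n)$ with coefficients $c_i,d_i$ into the substitution formula, the coefficient matrix of $\sigma\tau$ factors as
\[
\bar\sigma\!\begin{pmatrix} c_1 & c_2 \\ d_1 & d_2 \end{pmatrix}\cdot\begin{pmatrix} a_1 & a_2 \\ b_1 & b_2 \end{pmatrix}.
\]
Commutativity of $\mathbb{Z}_n[\mathbb{Z}_n^2]$ then yields $d(\sigma\tau)=\bar\sigma(d(\tau))\cdot d(\sigma)$. Since the identity automorphism has $a_1=b_2=1$ and $a_2=b_1=0$, we have $d(\mathrm{id})=1$, so setting $\tau=\sigma^{-1}$ gives $d(\sigma)\cdot\bar\sigma(d(\sigma^{-1}))=1$, showing that $d(\sigma)$ is a unit with inverse $\bar\sigma(d(\sigma^{-1}))$.

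The main obstacle is the substitution-formula induction: one must carefully track how both the abelianized coordinate and the Fox-type coefficients transform under matrix multiplication and inversion in $R(\Phi_{1,n})$, with the $\bar\sigma$-twist correctly applied. Once that lemma is in place, commutativity of the coefficient ring and the normalization $d(\mathrm{id})=1$ reduce the rest of the argument to a one-line determinant calculation.
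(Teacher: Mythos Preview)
Your argument is correct and rests on the same inductive core as the paper's proof: both analyze how the $t_{x_n}$- and $t_{y_n}$-coefficients of a word in $\sigma(x_n^*),\sigma(y_n^*)$ arise as $\mathbb{Z}_n[\mathbb{Z}_n^2]$-linear combinations of $a_1,b_1$ (resp.\ $a_2,b_2$) with the \emph{same} pair of coefficients. The paper, however, packages this more directly: it simply uses surjectivity of $\sigma$ to write $x_n^*$ and $y_n^*$ as words in $\sigma(x_n^*),\sigma(y_n^*)$, and the induction then produces $c_1,c_2,d_1,d_2\in\mathbb{Z}_n[\mathbb{Z}_n^2]$ with
\[
\begin{pmatrix} c_1 & d_1 \\ c_2 & d_2 \end{pmatrix}\begin{pmatrix} a_1 & a_2 \\ b_1 & b_2 \end{pmatrix}=I,
\]
giving invertibility of $d(\sigma)$ immediately. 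Your route instead upgrades the induction to a full substitution formula tracking the $\bar\sigma$-twist, yielding the cocycle identity $d(\sigma\tau)=\bar\sigma(d(\tau))\,d(\sigma)$ before specializing to $\tau=\sigma^{-1}$. This buys you more: you obtain an explicit inverse $\bar\sigma(d(\sigma^{-1}))$ and a structural statement about $d$ as a crossed homomorphism, at the cost of tracking the ring automorphism $\bar\sigma$ which the paper's bare-bones argument never needs to name.
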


\begin{proof}
As $\sigma $ is an automorphism, we can write $\left(\begin{array}{cc} {x_n} & {t_{x_n} } \\ {0} & {1} \end{array}\right)$ and $\left(\begin{array}{cc} {y_n} & {t_{y_n} } \\ {0} & {1} \end{array}\right)$ as words with the matrices: $\left(\begin{array}{cc} {x_n^{i} y_n^{j} } & {a_{1} t_{x_n} +a_{2} t_{y_n} } \\ {0} & {1} \end{array}\right)$ and $\left(\begin{array}{cc} {x_n^{k} y_n^{l} } & {b_{1} t_{x_n} +b_{2} t_{y_n} } \\ {0} & {1} \end{array}\right)$. Therefore, it is easy to check, by induction on the length of these words, that there are polynomials $c_{1} ,c_{2} ,d_{1} ,d_{2} \in {\mathbb{Z}_n} [\mathbb{Z}_n ^{2} ]$ such that:
\[\left(\begin{array}{cc} {x_n} & {t_{x_n} } \\ {0} & {1} \end{array}\right)=\left(\begin{array}{cc} {x_n} & {(c_{1} a_{1} +d_{1} b_{1} )t_{x_n} +(c_{1} a_{2} +d_{1} b_{2} )t_{y_n} } \\ {0} & {1} \end{array}\right)\Rightarrow _{} c_{1} a_{1} +d_{1} b_{1} =1,_{} c_{1} a_{2} +d_{1} b_{2} =0\]
\[\left(\begin{array}{cc} {y_n} & {t_{y_n} } \\ {0} & {1} \end{array}\right)=\left(\begin{array}{cc} {y_n} & {(c_{2} a_{1} +d_{2} b_{1} )t_{x_n} +(c_{2} a_{2} +d_{2} b_{2} )t_{y_n} } \\ {0} & {1} \end{array}\right)\Rightarrow _{} c_{2} a_{1} +d_{2} b_{1} =0,_{} c_{2} a_{2} +d_{2} b_{2} =1.\]

I.e., we got that: $\left(\begin{array}{cc} {c_{1} } & {d_{1} } \\ {c_{2} } & {d_{2} } \end{array}\right)\cdot \left(\begin{array}{cc} {a_{1} } & {a_{2} } \\ {b_{1} } & {b_{2} } \end{array}\right)=I$, and therefore, $\det \left(\begin{array}{cc} {a_{1} } & {a_{2} } \\ {b_{1} } & {b_{2} } \end{array}\right)$ is invertible in $\mathbb{Z}_n [\mathbb{Z}_n ^{2} ]$.

\end{proof}

From the last lemma it follows that if $\sigma \in Aut(\Phi)$ then $d(\sigma  )=\pm x^{r} y^{s} $ for some $r,s\in \mathbb{Z} $, as these are the invertible elements of $\mathbb{Z} [\mathbb{Z} ^{2} ]$ ([CF], chapter 8).

\begin{lemma}
\label{comsub}
Let $n \in \mathbb{N} \cup \{0\}$. For the commutator subgroup of $\Phi_n$ we have:
\begin{equation}
\label{com}
[\Phi_n,\Phi_n]=\left\{ \left(\begin{array}{cc}
1 & (1-y_n)\cdot p \cdot t_{x_n}-(1-x_n)\cdot p\cdot t_{y_n}\\
0 & 1
\end{array}\right)\,\mid\, p\in\mathbb{Z}_{n}[\mathbb{Z}_{n}^{2}]\right\}
 \end{equation}
 \end{lemma}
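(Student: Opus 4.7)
The plan is to prove both inclusions in \eqref{com}. First, I would check that the set on the right-hand side is well-defined as a subset of (the Magnus image of) $\Phi_n$: by commutativity of $\mathbb{Z}_n[\mathbb{Z}_n^2]$, the relation $(1-y_n)p \cdot (1-x_n) - (1-x_n)p \cdot (1-y_n) = 0$ holds, so Theorem \ref{magnus}(2) applies. Moreover, this set is a subgroup, because matrices with upper-left $1$ multiply by summing their upper-right entries, and the parametrization $p \mapsto (1-y_n)p\,t_{x_n} - (1-x_n)p\,t_{y_n}$ is additive in $p$.

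For the inclusion $\supseteq$, I would begin with the direct computation
\[
[x_n^*, y_n^*] = \left(\begin{array}{cc} 1 & (1-y_n)t_{x_n} - (1-x_n)t_{y_n} \\ 0 & 1 \end{array}\right),
\]
which realizes the right-hand side element with $p = 1$. For any $w \in \Phi_n$ whose Magnus image has upper-left $g \in (\mathbb{Z}_n)^2$, a short matrix computation shows that $w\,[x_n^*, y_n^*]\,w^{-1}$ has upper-right equal to $g \cdot \{(1-y_n)t_{x_n} - (1-x_n)t_{y_n}\}$, i.e., it is the right-hand side element with $p = g$. Since the monomials $\{x_n^i y_n^j\}$ span $\mathbb{Z}_n[\mathbb{Z}_n^2]$ as a $\mathbb{Z}$-module and the group law within the right-hand side adds $p$'s, every $p \in \mathbb{Z}_n[\mathbb{Z}_n^2]$ arises as the parameter of an element of $[\Phi_n,\Phi_n]$.

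For the inclusion $\subseteq$, since the right-hand side is a subgroup and $[\Phi_n,\Phi_n]$ is generated by commutators, it suffices to check that every commutator has the required form. Writing
\[
u = \left(\begin{array}{cc} g & a_1 t_{x_n} + a_2 t_{y_n} \\ 0 & 1 \end{array}\right),\quad v = \left(\begin{array}{cc} h & b_1 t_{x_n} + b_2 t_{y_n} \\ 0 & 1 \end{array}\right)
\]
in $\Phi_n$, Theorem \ref{magnus}(2) yields the constraints $a_1(1-x_n) + a_2(1-y_n) = 1-g$ and $b_1(1-x_n) + b_2(1-y_n) = 1-h$. A direct computation (using $gh = hg$ in the abelian upper-left) shows that $[u,v]$ has upper-left $1$ and upper-right $[(g-1)b_1 - (h-1)a_1]\,t_{x_n} + [(g-1)b_2 - (h-1)a_2]\,t_{y_n}$. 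Multiplying the first constraint by $b_1$ and the second by $a_1$ and subtracting produces the identity $(g-1)b_1 - (h-1)a_1 = (a_1 b_2 - a_2 b_1)(1-y_n)$; symmetrically, $(g-1)b_2 - (h-1)a_2 = -(a_1 b_2 - a_2 b_1)(1-x_n)$. Hence $[u,v]$ lies in the right-hand side with $p = a_1 b_2 - a_2 b_1$.

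The main obstacle I expect is this final algebraic identification: the fact that the same ``determinant'' $p = a_1 b_2 - a_2 b_1$ appears uniformly in both coordinates is a consequence of the Koszul-type shape of Theorem \ref{magnus}(2) together with commutativity of $\mathbb{Z}_n[\mathbb{Z}_n^2]$, and it is precisely what makes this explicit syzygy description of $[\Phi_n,\Phi_n]$ valid.
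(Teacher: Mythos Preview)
Your proposal is correct and follows essentially the same route as the paper's proof: both compute $[x_n^*,y_n^*]$ and use conjugation to reach every monomial $p$ for the inclusion $\supseteq$, and both compute a general commutator $[u,v]$ using the constraints of Theorem~\ref{magnus}(2) to identify $p=a_1b_2-a_2b_1$ for the inclusion $\subseteq$. Your write-up is in fact slightly more careful than the paper's, since you explicitly observe that the right-hand side is a subgroup (so that it suffices to check single commutators) and that it lies in $\Phi_n$ via Theorem~\ref{magnus}(2).
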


\begin{proof}
Let us write $h^{*}=\left(\begin{array}{cc} h & b_{1}t_{x_n}+b_{2}t_{y_n}\\0 & 1\end{array}\right) $ and $g^{*}=\left(\begin{array}{cc}g & a_{1}t_{x_n}+a_{2}t_{y_n}\\0 & 1\end{array}\right)$ for two arbitrary elements in $\Phi_n$. Then, by direct computation and by using the identities:
\[1-g=a_{1}(1-x_n)+a_{2}(1-y_n)\qquad\text{,}\qquad1-h=b_{1}(1-x_n)+b_{2}(1-y_n)
 \]
 we deduce:
\[[g^{*},h^{*}]=ghg^{-1}h^{-1}=\left(\begin{array}{cc}
1 & (1-y_n)(a_{1}b_{2}-b_{1}a_{2})t_{x_n}-(1-x_n)(a_{1}b_{2}-b_{1}a_{2})t_{y_n}\\
0 & 1
\end{array}\right)
 \]
This shows that multiplications of commutators of  $\Phi_n$ are of the form \ref{com}.\newline

For the other direction, one should notice that by the notations $x_n^{*}=\left(\begin{array}{cc}
x_n & t_{x_n}\\
0 & 1
\end{array}\right)$ and $y_n^{*}=\left(\begin{array}{cc}
y_n & t_{y_n}\\
0 & 1
\end{array}\right)$, we have:
\[[x_n^{*},y_n^{*}]=\left(\begin{array}{cc}
1 & (1-y_n)t_{x_n}-(1-x_n)t_{y_n}\\
0 & 1
\end{array}\right)
 \]
Moreover, conjugation by $x_n^{*}$ or $y_n^{*}$ multiply the right upper coordinate by $x_n$ or $y_n$ respectively. That shows that we can reach every $p\in\mathbb{Z}_{n}[\mathbb{Z}_{n}^{2}] $.

 \end{proof}

We can now prove Theorem \ref{last}:

\begin{proof}
It is obvious that $IA'(\Phi_n)\supseteq Inn(\Phi_n)$. On the other hand, consider an element $\sigma\in IA'(\Phi_{n})$. As $\sigma\in IA(\Phi_{n})$, by Lemma \ref{comsub}, we can describe $\sigma$ as follows:

\[\sigma=\begin{cases}
\left(\begin{array}{cc}
x_n & t_{x_n}\\
0 & 1
\end{array}\right)\mapsto & \left(\begin{array}{cc}
x_n & t_{x_n}\\
0 & 1
\end{array}\right)\left(\begin{array}{cc}
1 & (1-y_n)(x_n^{-1}p)t_{x_n}-(1-x_n)(x_n^{-1}p)t_{y_n}\\
0 & 1
\end{array}\right)=\\
 & =\left(\begin{array}{cc}
x_n & [1+(1-y_n)p]t_{x_n}+-(1-x_n)pt_{y_n}\\
0 & 1
\end{array}\right)\\
\left(\begin{array}{cc}
y_n & t_{y_n}\\
0 & 1
\end{array}\right)\mapsto & \left(\begin{array}{cc}
y_n & t_{y_n}\\
0 & 1
\end{array}\right)\left(\begin{array}{cc}
1 & (1-y_n)(y_n^{-1}q)t_{x_n}-(1-x_n)(y_n^{-1}q)t_{y_n}\\
0 & 1
\end{array}\right)=\\
 & =\left(\begin{array}{cc}
y_n & (1-y_n)qt_{x_n}+[1-(1-x_n)q]t_{y_n}\\
0 & 1
\end{array}\right)
\end{cases}\]

for some $p,q\in\mathbb{Z}_{n}[\mathbb{Z}_{n}^{2}]$.\newline

Now, one can compute the determinant of $\sigma$ and deduce that:

\[
d(\sigma)=[1+(1-y_n)p]\cdot[1-(1-x_n)q]+(1-x_n)p\cdot(1-y_n)q=1+(1-y_n)p-(1-x_n)q
 \]

But, by considering that $\sigma\in Aut'\Phi_{n}$ and  by applying the conclution of Lemma \ref{det}, we conclude that there are $i,j\in\mathbb{Z}_{n}$ such that:
\[1+(1-y_n)p-(1-x_n)q=\pm x_n^{i}y_n^{j}\]

 Moreover, one should notice that if $x_n\mapsto1,\; y_n\mapsto1$ then:
\[1+(1-y_n)p-(1-x_n)q\mapsto1\]
and therefore:
\[1+(1-y_n)p-(1-x_n)q=x_n^{i}y_n^{j}\]
Thus:
\begin{equation}
\label{ma}
1-x_n^{i}y_n^{j}=(1-x_n)q-(1-y_n)p
\end{equation}

Now, according to  Theorem \ref{magnus}, we conclude that the element:
\[\left(\begin{array}{cc}
x_n^{i}y_n^{j} & qt_{x_n}-pt_{y_n}\\
0 & 1
\end{array}\right)\]

is in the image of $\Phi_{n}$ in $R_n (\mathbb{Z}_n)$. Direct computation and use of equation \ref{ma} shows that:
\[\sigma=Inn\left(\begin{array}{cc}
x_n^{i}y_n^{j} & qt_{x_n}-pt_{y_n}\\
0 & 1
\end{array}\right)\]

so $\sigma \in Inn(\Phi_n)$ as required.

\end{proof}

That completes the main result of the paper. Now, one can ask whether $IA(\Phi_n)=Inn(\Phi_n)$. The next  explicit example  shows that this is not true in general.

Let us consider the map:
\[\sigma=\begin{cases}
x_n^{*}\rightarrow & y_n^{*}x_n^{*}y_n^{*-1}=x_n^{*}[x_n^{*-1},y_n^*]=\left(\begin{array}{cc}
x_n & y_n t_{x_n}+(1-x_n)t_{y_n}\\
0 & 1
\end{array}\right)\\
y_n^{*}\rightarrow & x_n^{*}y_n^{*}x_n^{*-1}=y_n^{*}[y_n^{*-1},x_n^*]=\left(\begin{array}{cc}
y_n & (1-y_n)t_{x_n}+x_n t_{y_n}\\
0 & 1
\end{array}\right)
\end{cases}
\]

One should notice that $\sigma$ defines a homomorphism of $\Phi_n$, as the relations that define $\Phi_n$ are sutisfied by all the elements of $\Phi_n$, and in particular, by $\sigma(x_n^{*})$ and $\sigma(y_n^{*})$. We are now going to show that when $n=p$ is prime, $\sigma$ defines an IA-automorphism of $\Phi_n$ which is not inner.\newline

\begin{lemma}
When $p$ is prime, the polynomial $x_p+y_p-1$ is invertible in $\mathbb{Z}_{p}[\mathbb{Z}_{p}^{2}]$.
\end{lemma}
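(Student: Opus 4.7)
The plan is to reduce to a local ring calculation, using that in characteristic $p$ the Frobenius makes the augmentation ideal nilpotent.

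First I would observe that in $\mathbb{Z}_p[\mathbb{Z}_p^2]$, the two generators $x_p,y_p$ satisfy $x_p^p=y_p^p=1$, so setting $u:=x_p-1$ and $v:=y_p-1$ we have
\[
u^p=(x_p-1)^p=x_p^p-1=0,\qquad v^p=(y_p-1)^p=0
\]
by Frobenius (this is where we use that $p$ is prime and the coefficient ring is $\mathbb{F}_p$). Thus the group ring can be rewritten as
\[
\mathbb{Z}_p[\mathbb{Z}_p^2]\;\cong\;\mathbb{F}_p[u,v]/(u^p,v^p),
\]
and its augmentation ideal $\mathfrak{m}=(u,v)$ consists of nilpotent elements: any monomial $u^iv^j$ with $i\geq p$ or $j\geq p$ vanishes, so $\mathfrak{m}^{2p-1}=0$ since any product of $2p-1$ elements of $\mathfrak{m}$ expands into monomials $u^iv^j$ with $i+j\geq 2p-1$, forcing $i\geq p$ or $j\geq p$.

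Next I rewrite the polynomial of interest in the new variables:
\[
x_p+y_p-1 \;=\; (u+1)+(v+1)-1 \;=\; 1+(u+v).
\]
Since $u+v\in\mathfrak{m}$ is nilpotent, the standard geometric series
\[
(1+(u+v))^{-1}\;=\;\sum_{k=0}^{2p-2}(-1)^k(u+v)^k
\]
is a finite sum and provides the required inverse. Therefore $x_p+y_p-1$ is a unit in $\mathbb{Z}_p[\mathbb{Z}_p^2]$.

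There is no serious obstacle here; the only point that requires care is verifying that $\mathfrak{m}$ is indeed a nil ideal, which follows at once from the Frobenius identity $(a+b)^p=a^p+b^p$ in characteristic $p$ together with $x_p^p=y_p^p=1$. This is exactly the step that fails for general $n$, which is why the hypothesis that $n=p$ is prime is essential.
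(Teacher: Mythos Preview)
Your proof is correct, and actually cleaner conceptually than the paper's, though both rest on the same Frobenius phenomenon. A minor sharpening: since you are already in characteristic $p$, Frobenius gives $(u+v)^p = u^p + v^p = 0$ directly, so your geometric series can stop at $k=p-1$ rather than $2p-2$.

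The paper takes a more bare-hands route: it simply writes down the candidate inverse $\sum_{i=0}^{p-1}(x_p+y_p)^i$ and checks
\[
(x_p+y_p-1)\sum_{i=0}^{p-1}(x_p+y_p)^i=(x_p+y_p)^p-1=x_p^p+y_p^p-1=1,
\]
using that the middle binomial coefficients vanish mod $p$ and that $x_p^p=y_p^p=1$. Your argument instead changes variables to $u=x_p-1$, $v=y_p-1$, identifies the group ring with the local Artinian ring $\mathbb{F}_p[u,v]/(u^p,v^p)$, and then appeals to the general fact that $1+(\text{nilpotent})$ is a unit. What you gain is a structural explanation (the augmentation ideal is nil, so every element of augmentation $1$ is a unit, not just this one), and a clear diagnosis of why the argument collapses for composite $n$. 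What the paper's approach gains is brevity and an explicit inverse without any change of coordinates.
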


\begin{proof}
When $p$ is prime, we have:
\[(x_p+y_p-1)\cdot\sum_{i=0}^{p-1}(x_p+y_p)^{i}	=	(x_p+y_p)^{p}-1=\]
\[
	=	x_p^{p}+\left(\begin{array}{c}
p\\
1
\end{array}\right)x_p^{p-1}y_p+\ldots+\left(\begin{array}{c}
p\\
p-1
\end{array}\right)x_p y_p^{p-1}+y_p^{p}-1=\]
\[
	=	1+0+\ldots+0+1-1=1
 \]

\end{proof}

\begin{proposition}
When $p$ is prime, $\sigma$ is onto. In particular, $\sigma$ is an isomorphism.
\end{proposition}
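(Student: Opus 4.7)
\medskip

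The plan is to exploit the fact that $\sigma$ acts as the identity on the abelianization $\Phi_p^{ab}$, so that surjectivity reduces to showing $[\Phi_p,\Phi_p]\subseteq\operatorname{Im}(\sigma)$. Indeed, from the given formulas, $\sigma(x_p^*)=x_p^*\cdot[x_p^{*-1},y_p^*]$ and $\sigma(y_p^*)=y_p^*\cdot[y_p^{*-1},x_p^*]$, so $\sigma$ fixes the images of $x_p^*$ and $y_p^*$ modulo $[\Phi_p,\Phi_p]$; therefore once we know $[\Phi_p,\Phi_p]\subseteq\operatorname{Im}(\sigma)$, we immediately conclude that $x_p^*,y_p^*\in\operatorname{Im}(\sigma)$ and hence $\operatorname{Im}(\sigma)=\Phi_p$. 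Since $\Phi_p$ is finite, a surjective endomorphism is automatically an isomorphism, which gives the second assertion.

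The main step is therefore to analyze the image of the single commutator $[x_p^*,y_p^*]$ under $\sigma$. Using the commutator computation from the proof of Lemma \ref{comsub} applied to $\sigma(x_p^*)$ and $\sigma(y_p^*)$ with coefficients $(a_1,a_2)=(y_p,1-x_p)$ and $(b_1,b_2)=(1-y_p,x_p)$, the scalar $a_1b_2-b_1a_2$ equals
\[
y_p\cdot x_p-(1-y_p)(1-x_p)=x_p+y_p-1.
\]
Consequently,
\[
\sigma\bigl([x_p^*,y_p^*]\bigr)=\left(\begin{array}{cc} 1 & (x_p+y_p-1)\bigl[(1-y_p)t_{x_p}-(1-x_p)t_{y_p}\bigr] \\ 0 & 1 \end{array}\right).
\]
By the preceding lemma, $x_p+y_p-1$ is a unit in $\mathbb{Z}_p[\mathbb{Z}_p^2]$.

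To finish, I would use the $\mathbb{Z}_p[\mathbb{Z}_p^2]$-module structure on $[\Phi_p,\Phi_p]$ coming from conjugation by $\Phi_p$ (since $[\Phi_p,\Phi_p]$ is abelian, this action factors through $\Phi_p^{ab}=\mathbb{Z}_p^2$). As noted after Lemma \ref{comsub}, conjugating a commutator by $x_p^*$ or $y_p^*$ multiplies its right-upper entry by $x_p$ or $y_p$ respectively; because $\sigma(x_p^*)$ and $\sigma(y_p^*)$ project to $x_p$ and $y_p$ in $\Phi_p^{ab}$, the same formula holds when we conjugate by $\sigma(x_p^*)$ or $\sigma(y_p^*)$. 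Thus by multiplying and conjugating $\sigma([x_p^*,y_p^*])$ within $\operatorname{Im}(\sigma)$, we realize every element of $[\Phi_p,\Phi_p]$ whose right-upper entry is of the form $q\cdot(x_p+y_p-1)\bigl[(1-y_p)t_{x_p}-(1-x_p)t_{y_p}\bigr]$ with $q\in\mathbb{Z}_p[\mathbb{Z}_p^2]$. Invertibility of $x_p+y_p-1$ then makes $q(x_p+y_p-1)$ range over all of $\mathbb{Z}_p[\mathbb{Z}_p^2]$, which by Lemma \ref{comsub} is precisely all of $[\Phi_p,\Phi_p]$.

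The only delicate point is justifying that the $\mathbb{Z}_p[\mathbb{Z}_p^2]$-module generated by $\sigma([x_p^*,y_p^*])$ inside $\operatorname{Im}(\sigma)$ is obtained by conjugation by $\sigma(x_p^*)$ and $\sigma(y_p^*)$ rather than by $x_p^*,y_p^*$ themselves; this is where one uses that the conjugation action on the abelian group $[\Phi_p,\Phi_p]$ depends only on the image in $\Phi_p^{ab}$, so conjugation by $\sigma(x_p^*)$ and $\sigma(y_p^*)$ induces the same operators as conjugation by $x_p^*$ and $y_p^*$. Everything else is direct computation with the matrix representation.
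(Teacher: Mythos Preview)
Your proof is correct and follows essentially the same route as the paper: compute $[\sigma(x_p^*),\sigma(y_p^*)]$ to obtain the factor $x_p+y_p-1$, use its invertibility together with the conjugation action and Lemma~\ref{comsub} to hit all of $[\Phi_p,\Phi_p]$, then lift via the identity on $\Phi_p^{ab}$ and conclude by finiteness. The only cosmetic difference is that you justify the conjugation action of $\sigma(x_p^*),\sigma(y_p^*)$ conceptually via the factoring through $\Phi_p^{ab}$, whereas the paper appeals to direct computation.
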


\begin{proof}
A direct computation shows that:
\[[\sigma(x_p^{*}),\sigma(y_p^{*})]=\left(\begin{array}{cc}
1 & (1-y_p)(x_p+y_p-1)t_{x_p}-(1-x_p)(x_p+y_p-1)t_{y_p}\\
0 & 1
\end{array}\right)
 \]
 Moreover, direct computation shows that conjugation by $\sigma(x_p^{*})$ and $\sigma(y_p^{*})$ multiply the right upper coordinate by $x_p$ and $y_p$ repectively. By the previous lemma $x_p+y_p-1$ is invertible in $\mathbb{Z}_{p}[\mathbb{Z}_{p}^{2}]$. Now, putting all together and by considering Lemma \ref{comsub} we conclude that $\sigma$ reaches all $[\Phi_p,\Phi_p]$.\newline

On the other hand, the projection of $\sigma$ to $\Phi_p^{ab}=\Phi_p/[\Phi_p,\Phi_p]$ is the identity map, so $\sigma$ reaches also all $\Phi_p^{ab}$. Therefore, $\sigma$ reaches all $\Phi_p$, as required.\newline

As $\Phi_p$ is a finite group we deduce that $\sigma$ is an isomorphism.

\end{proof}

\begin{proposition}
When $n=p$ is prime, the automorphism $\sigma$ is an IA-automorphism which is not inner.
\end{proposition}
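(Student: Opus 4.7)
The plan is to address both parts of the statement. First, $\sigma$ is IA: by construction $\sigma(x_p^*) = y_p^* x_p^* y_p^{*-1}$ and $\sigma(y_p^*) = x_p^* y_p^* x_p^{*-1}$ are conjugates of the generators, so they coincide with $x_p^*$ and $y_p^*$ modulo $[\Phi_p, \Phi_p]$, i.e., $\sigma$ is trivial on $\Phi_p^{ab}$.

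For the harder direction, I will suppose toward a contradiction that $\sigma = \mathrm{Inn}(h^*)$ for some $h^* = \left(\begin{array}{cc} g & a_1 t_{x_p} + a_2 t_{y_p} \\ 0 & 1 \end{array}\right) \in \Phi_p$, where $g = x_p^i y_p^j$ and $a_1, a_2 \in R := \mathbb{Z}_p[\mathbb{Z}_p^2]$. A direct matrix computation (using commutativity of $R$) shows that the upper-right entry of $h^* x_p^* h^{*-1}$ is $[g + (1-x_p) a_1] t_{x_p} + (1-x_p) a_2 t_{y_p}$, while that of $h^* y_p^* h^{*-1}$ is $(1-y_p) a_1 t_{x_p} + [g + (1-y_p) a_2] t_{y_p}$. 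Comparing with the given formulas for $\sigma(x_p^*)$ and $\sigma(y_p^*)$ produces four equations in $R$: (I) $g + (1-x_p) a_1 = y_p$, (II) $(1-x_p) a_2 = 1 - x_p$, (III) $(1-y_p) a_1 = 1 - y_p$, (IV) $g + (1-y_p) a_2 = x_p$.

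Next I will switch to the description $R = \mathbb{Z}_p[u, v]/(u^p, v^p)$ via $u := 1 - x_p$ and $v := 1 - y_p$; this is valid because $(1-u)^p = 1 - u^p$ in characteristic $p$ and $x_p^p = 1$. A short computation shows $\mathrm{Ann}_R(u) = u^{p-1} R$ and $\mathrm{Ann}_R(v) = v^{p-1} R$, so (II) and (III) force $a_2 = 1 + u^{p-1} Q$ and $a_1 = 1 + v^{p-1} P$ for some $P, Q \in R$. Substituting $a_1$ into (I) gives $g = 1 - u - v - u v^{p-1} P$; comparing with $g = (1-u)^i (1-v)^j \equiv 1 - iu - jv \pmod{\mathfrak{m}^2}$, where $\mathfrak{m} = (u, v)$, pins down $i \equiv j \equiv 1 \pmod p$ and hence $g = (1-u)(1-v) = 1 - u - v + uv$. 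Plugging this back into (I) and (IV) yields $u v^{p-1} P = -uv$ and $u^{p-1} v Q = -uv$.

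The last ingredient will be the Magnus condition of Theorem \ref{magnus}, which is necessary for $h^* \in \Phi_p$: $1 - g = a_1 u + a_2 v$. With our expressions this reduces to $u v^{p-1} P + u^{p-1} v Q = -uv$. But adding the two equations from the previous paragraph gives $u v^{p-1} P + u^{p-1} v Q = -2uv$, so $-2uv = -uv$, i.e., $uv = 0$ in $R$. Since $uv \ne 0$ in $\mathbb{Z}_p[u, v]/(u^p, v^p)$ for every prime $p$, this is the desired contradiction. The main obstacle is (II) and (III): the naive reading gives $a_1 = a_2 = 1$, but in characteristic $p$ the annihilators $u^{p-1} R$ and $v^{p-1} R$ are nontrivial, so the correct parametrization must be used; after that, it is the simultaneous use of all four conjugation equations together with the Magnus relation that closes the argument uniformly in $p$ (including the delicate case $p = 2$, where the usual "compare constant terms" shortcut fails).
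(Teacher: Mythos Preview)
Your proof is correct but takes a substantially different route from the paper. The paper's argument is a two-line application of the determinant invariant $d$: one computes directly that $d(\sigma)=x_p y_p-(1-x_p)(1-y_p)=x_p+y_p-1$, while on the other hand every inner automorphism of $\Phi_p$ lies in $Aut'(\Phi_p)$ (it lifts to an inner automorphism of $\Phi$), so by the remark following Lemma~\ref{det} its determinant must reduce from some $\pm x^r y^s\in\mathbb{Z}[\mathbb{Z}^2]$ and hence be a signed monomial $\pm x_p^{r}y_p^{s}$. Since $x_p+y_p-1$ has three nonzero coefficients in the group-ring basis, it is not of this form, and that is already the contradiction.

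Your approach bypasses the determinant and the lifting $Inn(\Phi_p)\subseteq Aut'(\Phi_p)$ entirely, instead solving the conjugation equations in the local ring $\mathbb{Z}_p[u,v]/(u^p,v^p)$ and playing off the four coordinate identities against the Magnus relation $1-g=a_1u+a_2v$. This is more hands-on and longer, but it has the virtue of being completely self-contained: it does not appeal to the structure of units in $\mathbb{Z}[\mathbb{Z}^2]$ or to the compatibility of $d$ with the reduction $\Phi\to\Phi_p$. The paper's route, in contrast, is much shorter precisely because it reuses that machinery, and it also makes transparent \emph{why} $\sigma$ fails to be inner---its ``Jacobian'' $d(\sigma)$ is a non-monomial unit that only exists in positive characteristic.
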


\begin{proof}
In the proof of the previous proposition we saw that $\sigma$ is an IA-automorphism. On the other hand, assume that  $\sigma$ is inner. By direct computation we deduce that the determinant of $\sigma$ equals $d(\sigma)=x_p+y_p-1$. On the other hand, as $Inn(\Phi_p)\subseteq Aut'(\Phi_p)$, we conclude that $d(\sigma)=\pm x_p^{r}y_p^{s}$, and this is a contradiction.

 \end{proof}

\end{document}